\setlist[1]{topsep=0.2cm,itemsep=0.2cm}
\DeclareFontFamily{U}{mathx}{\hyphenchar\font45}
\DeclareFontShape{U}{mathx}{m}{n}{<-> mathx10}{}
\DeclareSymbolFont{mathx}{U}{mathx}{m}{n}
\DeclareMathAccent{\widebar}{0}{mathx}{"73}
\newtheorem{theorem}{Theorem}[section]
\newtheorem{corollary}[theorem]{Corollary}
\newtheorem{lemma}[theorem]{Lemma}
\theoremstyle{definition}
\newtheorem{definition}[theorem]{Definition}
\theoremstyle{remark}
\newtheorem{remark}[theorem]{Remark}
\numberwithin{equation}{section}
\newcommand{\dd}{\,{\rm d}}
\newcommand{\R}{\mathbb R}
\DeclareMathOperator{\Ker}{Ker}
\DeclareMathOperator{\Imm}{Imm}
\DeclareMathOperator{\rk}{rk}
\newcommand{\de}{\partial}
\newcommand{\Mat}{{\bf M}}
\begin{document}

\title[The Manifold Of Variations: hazard assessment of short-term
impactors]{The Manifold Of Variations:\\hazard assessment of
  short-term impactors}%

\author{Alessio Del Vigna}%
\address{Space Dynamics Services s.r.l., Via Mario Giuntini,
  Navacchio di Cascina, Pisa, Italy}\email{delvigna@spacedys.com}
\address{Dipartimento di Matematica, Universit\`a di Pisa, Largo Bruno
  Pontecorvo 5, 56127 Pisa, Italy}

\begin{abstract}
    When an asteroid has a few observations over a short time span the
    information contained in the observational arc could be so little
    that a full orbit determination may be not possible. One of the
    methods developed in recent years to overcome this problem is
    based on the systematic ranging and combined with the Admissible
    Region theory to constrain the poorly-determined topocentric range
    and range-rate. The result is a set of orbits compatible with the
    observations, the Manifold Of Variations, a 2-dimensional
    compact manifold parametrized over the Admissible Region. Such a
    set of orbits represents the asteroid confidence region and is
    used for short-term hazard predictions. In this paper we review
    the Manifold Of Variations method and make a detailed analysis of
    the related probabilistic formalism.
\end{abstract}

\maketitle

\section{Introduction}
\label{sec:intro}

An asteroid just discovered has a strongly undetermined orbit, being
it weakly constrained by the few available astrometric
observations. This is called a Very Short Arc (VSA,
\cite{milani2004AR}) since the observations cover a short time
span. As it is well-known, the few observations constrain the position
and velocity of the object in the sky-plane, but leave almost unknown
the distance from the observer and the radial velocity, respectively
the topocentric range and range-rate. This means that the asteroid
confidence region is wide in at least two directions instead of being
elongated and thin. Thus one-dimensional sampling methods, such as the
use of the Line Of Variations (LOV, \cite{milani:multsol}), are not a
reliable representation of the confidence region. Indeed, VSAs are
often too short to allow a full orbit determination, which would make
it impossible even to define the LOV. To overcome the problem three
systems have been developed in recent years: Scout at the NASA JPL
\citep{farnocchia2015}, NEOScan at SpaceDyS and at the University of
Pisa \citep{spoto:immimp,delvigna:phd}, and {\sc neoranger} at the
University of Helsinki \citep{neoranger}.

Scout\footnote{\url{https://cneos.jpl.nasa.gov/scout/}} and
NEOScan\footnote{\url{https://newton.spacedys.com/neodys2/NEOScan/}}
are based on the systematic ranging, an orbit determination method
which explores a suitable subset of the topocentric range and
range-rate space \citep{chesley2005}. They constantly scan the Minor
Planet Center NEO Confirmation Page (NEOCP), with the goal of
identifying asteroids such as NEAs, MBAs or distant objects to
confirm/remove from the NEOCP and to give early warning of imminent
impactors. NEOScan uses a method based on the systematic ranging and
on the Admissible Region (AR) theory \citep{milani2004AR} to include
in the orbit determination process also the information contained in
the short arc, although too little to constrain a full orbit. Starting
from a sampling of the AR, a set of orbits compatible with the
observations is then computed by a doubly constrained differential
correction technique, which in turn ends with a sampling of the
Manifold Of Variations (MOV, \cite{tommei:phd}), a 2-dimensional
manifold of orbits parametrized over the AR and representing a
two-dimensional analogue of the LOV. This combination of techniques
provided a robust short-term orbit determination method and also a
two-dimensional sampling of the confidence region to use for the
subsequent hazard assessment and for the planning of the follow-up
activity of interesting objects.

In this paper we review the algorithm at the basis of NEOScan and
examine in depth the underlying mathematical formalism, focusing on
the short-term impact monitoring and the impact probability
computation. In Section~\ref{sec:armov} we summarise the AR theory and
the main definitions concerning the MOV. Section~\ref{sec:pdf-der}
contains the results for the derivation of the probability density
function on an appropriate integration space, by assuming that the
residuals are normally distributed and by a suitable linearisation. In
Section~\ref{sec:prob_full_nonlin} we show that without the
linearisation assumed in the previous section we find a distribution
known to be inappropriate for the problem of computing the impact
probability of short-term impactors \citep{farnocchia2015}. In
Section~\ref{sec:prob-int} we give a probabilistic interpretation to
the optimisation method used to define the MOV, namely the doubly
constrained differential corrections. Lastly, Section~\ref{sec:conc}
contains our conclusions and a possible application of the method
presented in this paper to study in future research.

\section{The Admissible Region and the Manifold Of Variations}
\label{sec:armov}

As anticipated in the introduction, the systematic ranging method on
which NEOScan is based makes a deep use of the Admissible Region. In
this section we briefly recap its main properties and we refer the
reader to relevant further references.

When in presence of a VSA, even when we are not able to fit a full
least squares orbit, we are anyway able to compute the right ascension
$\alpha$, the declination $\delta$, and their time derivatives
$\dot{\alpha}$ and $\dot{\delta}$, to form the so-called
\emph{attributable} \citep{milani2004AR}. For the notation, we
indicate the attributable as
\[
    \mathcal{A} \coloneqq (\alpha, \delta, \dot{\alpha}, \dot{\delta})
    \in \mathbb{S}^1 \times \left(-\tfrac{\pi}2, \tfrac{\pi}2\right)
    \times \R^2,
\]
where all the quantities are referred to the mean of the observational
times. The AR has been introduced to constrain the possible values of
the range $\rho$ and the range-rate $\dot\rho$ that the attributable
leaves completely undetermined. Among other things, we require that
the observed object is a Solar System body, discarding the orbits
corresponding to too small objects to be source of meteorites (the
so-called shooting star limit).

The AR turns out to be a compact subset of $\R_{\geq 0}\times \R$ and
to have at most two connected components. Commonly the AR has one
component and the case with two components indicates the possibility
for the asteroid to be distant. Since the AR is compact we can sample
it with a finite number of points, and we use two different sampling
techniques depending on the geometric properties of the AR and on the
existence of a reliable least squares orbit. A nominal solution is an
orbit obtained by unconstrained (\emph{i.e.}  full) differential
corrections, starting from a preliminary orbit as first guess.  Indeed
it does happen that the available observations are enough to constrain
a full orbit: it essentially depends on the quality of the observed
arc, which can be measured by the arc type and the signal-to-noise
ratio of the geodesic curvature of the arc on the celestial sphere
\citep{milani2007}. We say that the nominal orbit is \emph{reliable}
if the latter is $>3$. If this is the case we anyway compute a
sampling of orbits compatible with the observations to use to make
predictions, but instead of considering the entire AR we exploit the
knowledge of the least squares orbit, and in particular of its
covariance.\\[0.15cm]
{\em Grid sampling}. When a reliable nominal solution does not exist
or it is not reliable, the systematic ranging is performed by a
two-step procedure and in both steps it uses rectangular grids over
the AR. In the first step a grid covers the entire AR, with the sample
points for $\rho$ spaced either uniformly or with a logarithmic scale
depending on the geometry of the AR (see Figure~\ref{fig:grid},
left). The corresponding sampling of the MOV is then computed, with
the doubly constrained differential corrections procedure described
below. Then we are able to identify the orbits which are more
compatible with the observations by computing their $\chi^2$ value
(see Equation~\eqref{chimov}), and we can restrict the grid to the
region occupied by such orbits. Thus the grid used in the second step
typically covers a smaller region and thus it has a higher resolution
than the first one (see Figure~\ref{fig:grid}, right).
\\[0.5cm]
\noindent{\em Cobweb sampling}. If a reliable nominal orbit exists,
instead of using a grid we compute a spider web sampling in a suitable
neighbourhood of the nominal solution. This is obtained by following
the level curves of the quadratic approximation of the target function
used to minimise the RMS of the observational residuals (see
Figure~\ref{fig:cobweb}).
\\[0.15cm]
\noindent For the formal definition of the AR and the proof of its
properties the reader can refer to \cite{milani2004AR}. The operative
definition used in NEOScan and a detailed explanation of the sampling
techniques can be found in \cite{spoto:immimp} and
\cite{delvigna:phd}.
\begin{figure}[h]
    \includegraphics[scale=0.375]{./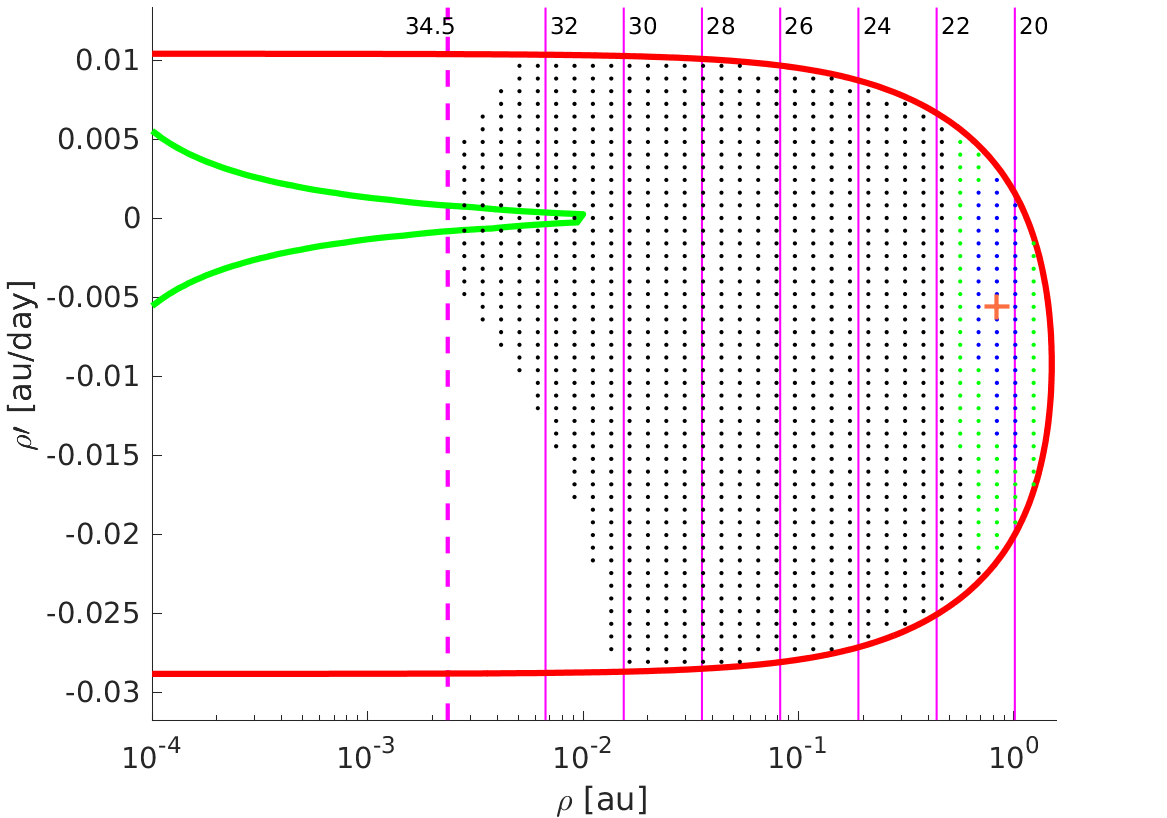}
    \quad
    \includegraphics[scale=0.375]{./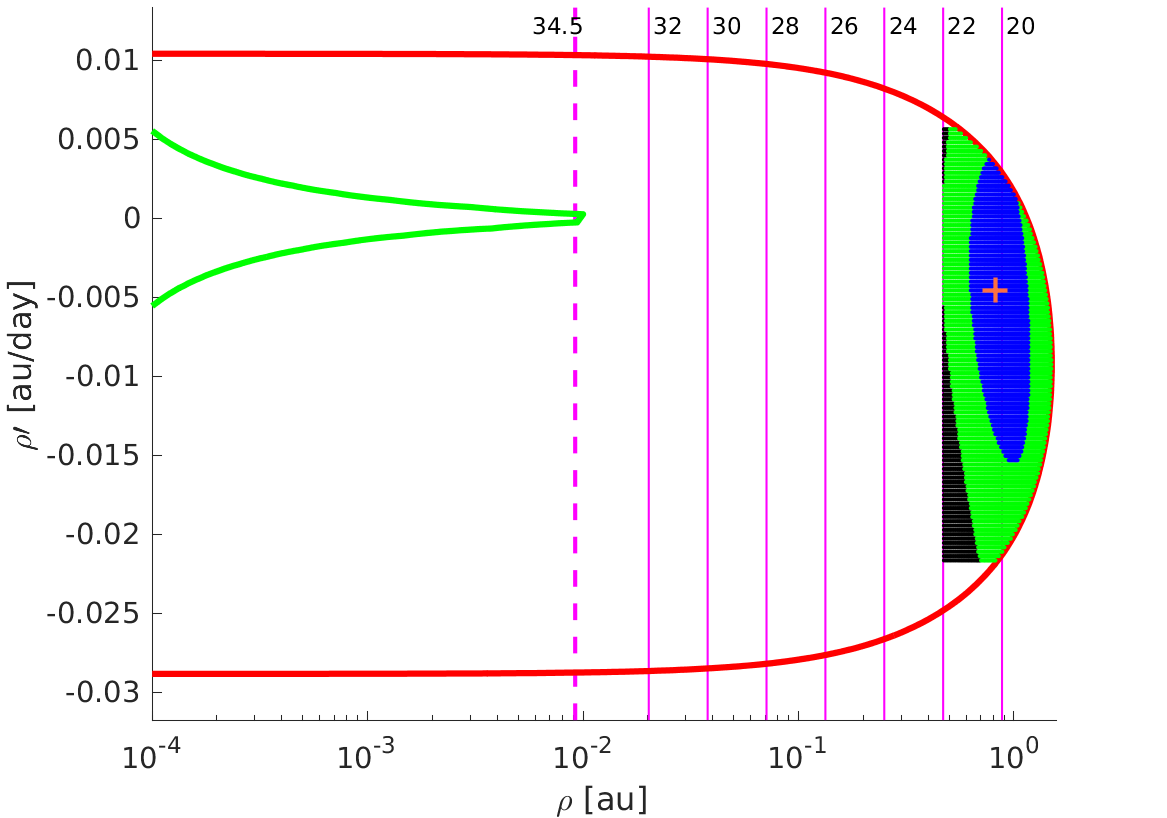}
    \caption{Admissible Region sampling with the rectangular
      grids. \emph{Left}. AR sampling performed with the first grid,
      covering the whole AR. \emph{Right}. Sampling of the region
      containing the orbits with $\chi<5$ resulting from the first
      grid. In both plots the sample points are marked in blue when
      $\chi\leq 2$ and in green when $2<\chi<5$.}
    \label{fig:grid}
\end{figure}
\vspace{-0.85cm}
\begin{figure}[h]
    \includegraphics[scale=0.375]{./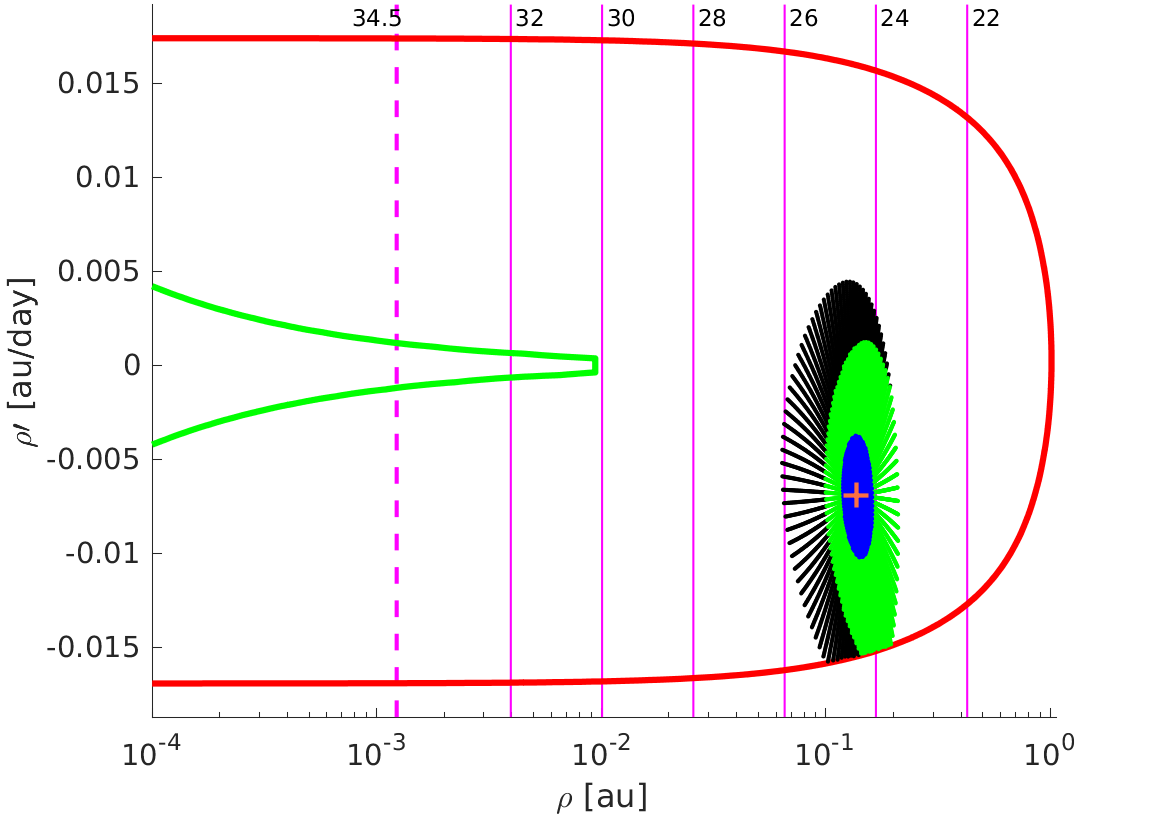}
    \caption{Admissible Region sampling with the cobweb technique. The
      sample points are marked in blue when $\chi\leq 2$ and in green
      when $2<\chi<5$.}
    \label{fig:cobweb}
\end{figure}

We now describe the Manifold Of Variations, a sample of orbits
compatible with the observational data set. In general, to obtain
orbits from observations we use the least squares method. The target
function is
\[
    Q(\mathbf{x}) \coloneqq \frac{1}{m} \bm{\xi}(\mathbf x)^\top W
    \bm{\xi}(\mathbf x),
\]
where $\mathbf{x}$ are the orbital elements to fit, $m$ is the number
of observations used, $\bm{\xi}$ is the vector of the
observed-computed debiased astrometric residuals, and $W$ is the
weight matrix. As explained in the introduction, in general a full
orbit determination is not possible for short arcs. Nevertheless, from
the observational data set it is possible to compute an attributable
$\mathcal{A}_0$. The AR theory has been developed to obtain
constraints on the values of $(\rho,\dot\rho)$, so that we can merge
the information contained in the attributable with the knowledge of an
AR sampling. The basic idea of this method is to fix $\rho$ and
$\dot\rho$ at some specific values $\bm\rho_0=(\rho_0,\dot\rho_0)$
obtained from the AR sampling, compose the full orbit
$(\mathcal{A}_0,\bm\rho_0)$, and fit only the attributable part to the
observations with a suitable differential corrections procedure.

\begin{definition}
    Given a subset $K$ of the AR, we define the \emph{Manifold Of
      Variations} to be the set of points
    $(\mathcal A^*(\bm\rho_0),\bm\rho_0)$ such that $\bm\rho_0\in K$
    and $\mathcal A^*(\bm\rho_0)$ is the local minimum of the function
    $\left.Q(\mathcal{A},\bm\rho)\right|_{\bm\rho=\bm\rho_0}$, when it
    exists. We denote the Manifold Of Variations with $\mathcal M$.
\end{definition}

\begin{remark}
    In general $\mathcal{M}$ is a 2-dimensional manifold, since the
    differential of the map from the $(\rho,\dot\rho)$ space to
    $\mathcal M$ has rank $2$ (see Lemma~\ref{prop:MOVmanifold}).
\end{remark}

\noindent The set $K$ is the intersection of a rectangle with the AR
in the case of the grid sampling, whereas $K$ is an ellipse around
$\bm\rho^*$ in the cobweb case, where
$\mathbf{x}^* = (\mathcal{A}^*,\bm\rho^*)$ is the reliable nominal
orbit. To fit the attributable part we use the \emph{doubly
  constrained differential corrections}, which are usual differential
corrections but performed on a 4-dimensional space rather than on a
6-dimensional one. The normal equation is
\[
    C_{\mathcal A} \Delta \mathcal A = D_{\mathcal A},
\]
where
\begin{equation}\label{C_A}
    C_{\mathcal A} \coloneqq B_{\mathcal A}^\top  W B_{\mathcal A}\,,\quad
    D_{\mathcal A} \coloneqq -B_{\mathcal A}^\top  W \bm{\xi}\,, \quad
    B_{\mathcal A} \coloneqq \frac{\partial{\bm{\xi}}}{\partial{\mathcal A}}.
\end{equation}
We call $K'$ the subset of $K$ on which the doubly constrained
differential corrections converge, giving a point on $\mathcal{M}$.

\begin{definition}
    For each orbit $\mathbf{x}\in \mathcal{M}$ we define the
    \emph{$\chi$-value} to be
    \begin{equation}{\label{chimov}}
        \chi(\mathbf x) \coloneqq \sqrt{m(Q(\mathbf x)- Q^*)},
    \end{equation}
    where $Q^*$ is the minimum value of the target function over $K'$,
    that is
    $Q^* \coloneqq \min_{\bm\rho\in K'}
    Q(\mathcal{A}^*(\bm\rho),\bm\rho)$. The orbit for which this
    minimum is attained is denoted with $\mathbf{x}^*$ and referred to
    as the \emph{best-fitting orbit}.
\end{definition}

The standard differential corrections are a variant of the Newton
method to find the minimum of a multivariate function, the target
function $Q$ \citep{milani:orbdet}. The uncertainty of the result can
be described in terms of confidence ellipsoids (optimisation
interpretation) as well as in the language of probability
(probabilistic interpretation). In Section~\ref{sec:prob-int} we give
a probabilistic interpretation to the doubly constrained differential
corrections procedure.


\section{Derivation of the probability density function}
\label{sec:pdf-der}

Starting from \cite{spoto:immimp} we now give the proper mathematical
formalism to derive the probability density function on a suitable
space $S$ which we define below. Upon integration this will lead to
accurate probability estimates, the most important and urgent of which
is the impact probability computation for a possible imminent
impactor.

Our starting point is a probability density function on the residuals
space, to propagate back to $S$. In particular, we assume the
residuals to be distributed according to a Gaussian random variable
$\bm\Xi$, with zero mean and covariance $\Gamma_{\bm\xi}=W^{-1}$, that
is
\[
    p_{\bm\Xi}(\bm \xi) = N(\bm 0,\Gamma_{\bm \xi})(\bm \xi)=
    \frac{\sqrt{\det W}}{(2\pi)^{m/2}}
    \exp\left(-\frac{mQ(\bm\xi)}{2}\right)=\frac{\sqrt{\det
        W}}{(2\pi)^{m/2}} \exp\left(-\frac 12 \bm\xi^\top
      W\bm\xi\right).
\]
Without loss of generality, we can assume that
\begin{equation}\label{eq:res_norm_gauss}
    p_{\bm\Xi}(\bm \xi) = N(\bm 0,I_m)(\bm \xi)= \frac
    1{(2\pi)^{m/2}}\exp\left(-\frac 12 \bm\xi^\top \bm\xi\right),
\end{equation}
where $I_m$ is the $m\times m$ identity matrix. This is obtained by
using the normalised residuals in place of the true residuals (see for
instance \cite{milani:orbdet}). For the notation, from now on we will
use $\bm\xi$ to indicate the normalised residuals. Note that with the
use of normalised residuals the target function becomes
$Q(\mathbf{x}) = \frac 1m \bm\xi(\mathbf{x})^\top \bm\xi(\mathbf{x})$.

\subsection{Spaces and maps}

Let us introduce the following spaces.
\begin{enumerate}
  \item $S$ is the sampling space, which is $\R_{\geq 0} \times \R$ if
    the sampling is uniform in $\rho$, $\R^2$ if the sampling is
    uniform in $\log_{10}\rho$, and $\R_{\geq 0} \times \mathbb{S}^1$
    in the cobweb case.
  \item $K'\subseteq \R_{\geq 0}\times \R$ has already been introduced
    in Section~\ref{sec:armov}, and it is the subset of the points of
    the AR on which the doubly constrained differential corrections
    achieved convergence.
  \item $\mathcal{X}\coloneqq A\times R$ is the orbital elements space
    in attributable coordinates, where
    $A\coloneqq \mathbb{S}^1 \times (-\pi/2, \pi/2) \times \R^2$ is
    the attributable space and $R\coloneqq \R_{\geq 0}\times \R$ is
    the range/range-rate space.
  \item $\mathcal M$ is the Manifold Of Variations, a 2-dimensional
    submanifold of $\mathcal{X}$.
  \item $\R^m$ is the residuals space, whose dimension is $m\geq 6$
    since the number of observations must be $\ge 3$.
\end{enumerate}
We now define the maps we use to propagate back the density
$p_{\bm\Xi}$. First, the residuals are a function of the fit
parameters, that is $\bm \xi=F(\bold x)$, with
$F:\mathcal{X}\rightarrow \R^m$ being a differentiable map. The second
map goes from the AR space to the MOV space.
\begin{definition}
    The map $f_{\mu}:K'\rightarrow \mathcal{M}$ is defined to
    be
    \[
        f_{\mu}(\bm\rho) \coloneqq (\mathcal{A}^*(\bm\rho),\bm\rho),
    \]
    where $\mathcal{A}^*(\bm\rho)\in A$ is the best-fit attributable
    obtained at convergence of the doubly constrained differential
    corrections.
\end{definition}

\begin{lemma}\label{prop:MOVmanifold}
    The map $f_{\mu}$ is a global parametrization of $\mathcal{M}$ as
    a $2$-dimensional manifold.
\end{lemma}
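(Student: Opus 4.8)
The plan is to establish the three defining properties of a global parametrization: that $f_\mu$ is smooth, injective, and an immersion whose set-theoretic inverse is continuous, so that it is a diffeomorphism from $K'$ onto $\mathcal{M}$.

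Injectivity is immediate and will not be the difficulty. Since the second component of $f_\mu(\bm\rho)$ is $\bm\rho$ itself, $f_\mu(\bm\rho_1)=f_\mu(\bm\rho_2)$ forces $\bm\rho_1=\bm\rho_2$ by comparing the last two coordinates. The same observation supplies the inverse: the restriction to $\mathcal{M}$ of the projection $(\mathcal{A},\bm\rho)\mapsto\bm\rho$ is a continuous left inverse of $f_\mu$, so once $f_\mu$ is known to be a continuous injective immersion it is automatically a homeomorphism onto its image, hence an embedding.

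The substance of the argument is the smoothness of $\bm\rho\mapsto\mathcal{A}^*(\bm\rho)$, which I would obtain from the implicit function theorem. By definition $\mathcal{A}^*(\bm\rho)$ is the local minimum of $\mathcal{A}\mapsto Q(\mathcal{A},\bm\rho)$ reached at convergence, so it satisfies the stationarity condition
\[
    G(\mathcal{A},\bm\rho)\coloneqq \frac{\partial Q}{\partial \mathcal{A}}(\mathcal{A},\bm\rho)=\frac2m B_{\mathcal A}^\top W\bm\xi = -\frac2m D_{\mathcal A}=0,
\]
which is exactly the normal equation of \eqref{C_A}. Applying the implicit function theorem to $G$ at $(\mathcal{A}^*(\bm\rho_0),\bm\rho_0)$ requires that the partial Jacobian $\partial G/\partial\mathcal{A}$, namely the Hessian $\partial^2 Q/\partial\mathcal{A}^2$ whose Gauss--Newton part is $\tfrac2m C_{\mathcal A}=\tfrac2m B_{\mathcal A}^\top W B_{\mathcal A}$, be invertible. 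At a genuine nondegenerate minimum this matrix is positive definite, so the theorem yields a unique smooth solution $\mathcal{A}^*(\bm\rho)$ of $G=0$ near $\bm\rho_0$; uniqueness of the converged minimum lets these local branches agree and patch into a single smooth map on all of $K'$, with derivative $\partial\mathcal{A}^*/\partial\bm\rho=-(\partial^2 Q/\partial\mathcal{A}^2)^{-1}(\partial^2 Q/\partial\mathcal{A}\,\partial\bm\rho)$.

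With smoothness in hand, the differential of $f_\mu$ is, in block form relative to the splitting $\mathcal{X}=A\times R$,
\[
    \frac{\partial f_\mu}{\partial \bm\rho}=\begin{pmatrix}\partial\mathcal{A}^*/\partial\bm\rho\\[2pt] I_2\end{pmatrix},
\]
and the $2\times2$ identity block in the lower rows forces $\rk\!\left(\partial f_\mu/\partial\bm\rho\right)=2$ everywhere, so $f_\mu$ is an immersion. Combined with the preceding paragraph, $f_\mu$ is a smooth injective immersion with continuous inverse, hence an embedding realising $\mathcal{M}$ as a $2$-dimensional submanifold of $\mathcal{X}$ globally parametrized by $f_\mu$. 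I expect the only real obstacle to be the nondegeneracy of $C_{\mathcal A}$ underlying the implicit function theorem: it amounts to $B_{\mathcal A}$ having full column rank $4$ at the converged orbit, i.e.\ that the residuals respond nondegenerately to all four attributable coordinates. Since this is precisely the condition under which the doubly constrained corrections converge to an isolated minimum, it is natural to assume it holds throughout $K'$.
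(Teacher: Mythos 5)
Your proposal is correct and takes essentially the same route as the paper: the paper's proof of Lemma~\ref{prop:MOVmanifold} rests precisely on the Jacobian block form $\begin{psmallmatrix}\partial\mathcal{A}^*/\partial\bm\rho\\ I_2\end{psmallmatrix}$, whose identity block forces rank~$2$, together with the (merely asserted) $C^1$ regularity of $f_\mu$, and the implicit-function-theorem argument you give for smoothness of $\bm\rho\mapsto\mathcal{A}^*(\bm\rho)$ --- applied to the stationarity condition $\partial Q/\partial\mathcal{A}=0$ with invertibility of $C_{\mathcal{A}}$ on $K'$ guaranteed by convergence of the doubly constrained corrections --- is exactly the one the paper spells out later, in the proof of Theorem~\ref{thm:mov-ar}. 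Your additional points (injectivity and the continuous inverse via the projection $(\mathcal{A},\bm\rho)\mapsto\bm\rho$) fill in details the paper leaves implicit rather than constituting a different method.
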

\begin{proof}
    The set $K'$ is a subset of $\R^2$ with non-empty interior, the
    map $f_{\mu}$ is at least $C^1$ and its Jacobian matrix is
    \begin{equation}\label{eq:diff_f_M}
      (Df_\mu)_{\bm\rho} = \frac{\partial
        f_\mu}{\partial \bm\rho}(\bm\rho) =
      \begin{pmatrix}
        \dfrac{\partial\mathcal A^*}{\partial\bm\rho}(\bm\rho)\\[0.3cm]
        I_2
      \end{pmatrix},
    \end{equation}
    from which is clear that it is full rank on $K'$.
\end{proof}

\noindent The last map to introduce is that from the sampling space
$S$ to the AR space $R$.
\begin{definition}
    The map $f_\sigma:S\rightarrow R$ is defined according to the
    sampling technique:
    \begin{enumerate}
      \item if the sampling is uniform in $\rho$, $f_\sigma$ is the
        identity map;
      \item if the sampling is uniform in $\log_{10}\rho$, we have
        $S=\R^2$ and $f_\sigma (x,y) \coloneqq
        (10^x,y)$;
      \item if we are in the cobweb case,
        $S=\R_{\geq 0}\times \mathbb{S}^1$ and the map $f_\sigma$ is
        given by
        \[
            f_\sigma(r,\theta)\coloneqq r
            \begin{pmatrix}
                \sqrt{\lambda_1} \cos{\theta} & -\sqrt{\lambda_2} \sin{\theta}\\
                \sqrt{\lambda_2} \sin{\theta} & \phantom{-}\sqrt{\lambda_1} \cos{\theta}
            \end{pmatrix}
            \mathbf{v}_1 +
            \begin{pmatrix}
                \rho^* \\
                \dot{\rho}^*
            \end{pmatrix},
            \
        \]
        where $\lambda_1>\lambda_2$ are the eigenvalues of the
        $2 \times 2$ matrix $\Gamma_{\bm\rho\bm\rho}(\mathbf{x}^*)$,
        the latter being the restriction of the covariance matrix
        $\Gamma(\mathbf{x}^*)$ to the $(\rho,\dot\rho)$ space, and
        $\mathbf{v_1}$ is the unit eigenvector corresponding to
        $\lambda_1$.
    \end{enumerate}
\end{definition}

\noindent We then consider the following chain of maps
\[
    S \overset{f_\sigma}{\longrightarrow} R \supseteq K'
    \overset{f_{\mu}}{\longrightarrow} \mathcal{M}\subseteq
    \mathcal{X} \overset{F}{\longrightarrow}\R^m
\]
and we use it to compute the probability density function on $S$
obtained by propagating $p_{\bm\Xi}$ back.

\subsection{Conditional density of a Gaussian on an affine subspace}
\label{sec:gauss-cond}

In this section we establish general results about the conditional
probability density function of a Gaussian random variable on an
affine subspace. Let $m$ and $N$ be two positive integers, with
$m>N$. Let $B\in \Mat_{m,N}(\R)$ a $m\times N$ matrix with full rank,
that is $\rk(B)=N$. Consider the affine $N$-dimensional subspace of
$\R^m$ given by
\[
    W\coloneqq \left\{\bm\xi\in \R^m \,:\, \bm\xi = B\mathbf{x}+\bm\xi^*,\
    \mathbf{x}\in \R^N\right\} = \Imm(B) + \bm\xi^*.
\]
We can also assume that $\bm\xi^*$ is orthogonal to $W$, that is
$\bm\xi^*\in \Imm(B)^\perp$, otherwise it is possible to subtract the
component parallel to $W$. Given the random variable $\bm\Xi$ with the
Gaussian distribution on $\R^m$ as in \eqref{eq:res_norm_gauss}, we
want to find the conditional probability density $p_{\bm\Xi|W}$ of
$\bm\Xi$ on $W$. Let $R\in \Mat_m(\R)$ be a $m\times m$ rotation
matrix and let $f_R:\R^m\rightarrow \R^m$ be the affine map
\[
    f_R(\bm\xi) \coloneqq R(\bm\xi-\bm\xi^*).
\]
Throughout this section, we use the notation
$f_R(\bm\xi) \eqqcolon \bm\xi_R =
  \begin{psmallmatrix}\bm\xi'\\\bm\xi'' \end{psmallmatrix}$, with $\bm\xi'\in \R^{m-N}$ and $\bm\xi''\in \R^N$.
  We choose $R$ in such a way that
\begin{equation}
    \label{eq:rotation}
    f_R^{-1}\begin{pmatrix}\mathbf{0}\\\bm\xi''\end{pmatrix} =
    R^\top \begin{pmatrix}\mathbf{0}\\\bm\xi''\end{pmatrix} +
    \bm\xi^*\in W \quad\text{for all $\bm\xi''\in \R^N$}.
\end{equation}

\begin{lemma}
    \label{lemma:equivalence_rotation}
    Condition \eqref{eq:rotation} holds for all $\bm\xi''\in \R^N$ if
    and only if there exists an invertible matrix $A\in \Mat_N(\R)$
    such that
    $RB = \begin{psmallmatrix}0\\A\end{psmallmatrix}$.
\end{lemma}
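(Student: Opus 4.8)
The plan is to translate the geometric requirement \eqref{eq:rotation} into a statement about the image of $B$ under the rotation, and then to exploit that $R$ is orthogonal (so $R^{-1}=R^\top$) together with a dimension count to pass freely between the two formulations.

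First I would unwind what \eqref{eq:rotation} is asking. Since $W=\Imm(B)+\bm\xi^*$, the membership $R^\top\begin{psmallmatrix}\mathbf 0\\\bm\xi''\end{psmallmatrix}+\bm\xi^*\in W$ is equivalent to $R^\top\begin{psmallmatrix}\mathbf 0\\\bm\xi''\end{psmallmatrix}\in\Imm(B)$. Requiring this for every $\bm\xi''\in\R^N$ is precisely the subspace inclusion $R^\top\bigl(\{\mathbf 0\}\times\R^N\bigr)\subseteq\Imm(B)$, where $\{\mathbf 0\}\times\R^N$ denotes the coordinate subspace carrying the last $N$ entries of the splitting $\bm\xi_R=\begin{psmallmatrix}\bm\xi'\\\bm\xi''\end{psmallmatrix}$. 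Because $R^\top$ is invertible, the left-hand side is an $N$-dimensional subspace, and $\dim\Imm(B)=\rk(B)=N$ as well; hence the inclusion is in fact an equality $R^\top(\{\mathbf 0\}\times\R^N)=\Imm(B)$.

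Next I would apply $R$ to both sides and use $RR^\top=I_m$, turning this into $\{\mathbf 0\}\times\R^N=R(\Imm(B))=\Imm(RB)$. Saying that $\Imm(RB)$ is contained in $\{\mathbf 0\}\times\R^N$ is exactly saying that the top $m-N$ rows of $RB$ vanish, i.e. $RB=\begin{psmallmatrix}0\\A\end{psmallmatrix}$ for some $A\in\Mat_N(\R)$. For the invertibility of $A$ I would note that $R$ invertible and $\rk(B)=N$ force $\rk(RB)=N$; but with the block shape just obtained $\rk(RB)=\rk(A)$, so $A$ is a full-rank $N\times N$ matrix and therefore invertible. This settles the forward implication.

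Finally, for the converse I would read the same chain backwards: starting from $RB=\begin{psmallmatrix}0\\A\end{psmallmatrix}$ with $A$ invertible gives $\Imm(RB)\subseteq\{\mathbf 0\}\times\R^N$, and equality of dimensions ($\rk(RB)=N$) upgrades this to $R(\Imm(B))=\{\mathbf 0\}\times\R^N$; applying $R^\top$ returns $\Imm(B)=R^\top(\{\mathbf 0\}\times\R^N)$, which is \eqref{eq:rotation}. The only points requiring care are the two dimension counts that promote each inclusion to an equality and the observation that the invertibility of $A$ is automatic from the full rank of $B$; there is no genuine analytic obstacle, the argument being pure finite-dimensional linear algebra resting on the orthogonality of $R$.
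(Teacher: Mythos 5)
Your proof is correct, and it takes a genuinely different route from the paper's. You recast condition \eqref{eq:rotation} as the subspace inclusion $R^\top\bigl(\{\mathbf 0\}\times\R^N\bigr)\subseteq\Imm(B)$, promote it to an equality by a dimension count (both sides are $N$-dimensional, since $R^\top$ is invertible and $\rk(B)=N$), and then apply $R$ to obtain $\Imm(RB)=\{\mathbf 0\}\times\R^N$, from which the block form $RB=\begin{psmallmatrix}0\\A\end{psmallmatrix}$ and the invertibility of $A$ (via $\rk(A)=\rk(RB)=N$) are read off at once; the converse is the same chain run backwards. The paper argues pointwise instead: for $(\Leftarrow)$ it exhibits, for each $\bm\xi''$, the explicit preimage $\tilde{\mathbf x}=A^{-1}\bm\xi''$ and computes $R^\top RB\tilde{\mathbf x}=B\tilde{\mathbf x}$; for $(\Rightarrow)$ it builds a correspondence $P:\R^N\to\R^N$, $P(\bm\xi'')=\tilde{\mathbf x}$, shows $P$ is well defined (injectivity of $RB$), linear and injective, concludes it is a bijection, and defines $A$ as the matrix of $P^{-1}$. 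The substance is the same: your dimension count plays exactly the role of the paper's appeal to the fact that an injective linear endomorphism of $\R^N$ is bijective (the surjectivity of $P$). One point deserves emphasis, and you handle it correctly: the raw condition only yields $\{\mathbf 0\}\times\R^N\subseteq\Imm(RB)$, whereas the vanishing of the top $m-N$ rows requires the \emph{reverse} inclusion, so upgrading to equality by dimensions \emph{before} applying $R$ is essential and not a cosmetic step. What your version buys is symmetry (both implications fall out of one chain of equivalences) and a direct construction of $A$ as the lower block of $RB$ rather than through the inverse of an auxiliary map; what the paper's version buys is an entirely explicit, vector-level computation that never invokes images of subspaces.
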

\begin{proof}
    $(\Leftarrow)$ Let $\begin{psmallmatrix}\mathbf{0}\\
        \bm\xi''\end{psmallmatrix}\in \R^m$. Since $A$ is invertible
    there exists $\tilde{\mathbf{x}}\in\R^N$ such that
    $A\tilde{\mathbf{x}} = \bm\xi''$. Then
    $R^\top \begin{psmallmatrix}\mathbf{0}\\\bm\xi''\end{psmallmatrix}
    =
    R^\top \begin{psmallmatrix}0\\A\end{psmallmatrix}\tilde{\mathbf{x}}
    = R^\top RB\tilde{\mathbf{x}} = B\tilde{\mathbf{x}}$, hence
    $R^\top \begin{psmallmatrix}\mathbf{0}\\\bm\xi''\end{psmallmatrix}
    +
    \bm\xi^* = B\tilde{\mathbf{x}}+ \bm\xi^*\in W$.\\[0.15cm]
    $(\Rightarrow)$ Condition \eqref{eq:rotation} implies that for all
    $\bm\xi''\in \R^N$ there exists $\tilde{\mathbf{x}}\in\R^N$ such
    that
    $\begin{psmallmatrix}\mathbf{0}\\\bm\xi''\end{psmallmatrix} =
    RB\tilde{\mathbf{x}}$. Since the multiplication by $RB$ is
    injective, such $\tilde{\mathbf{x}}$ is unique. Therefore there
    exists a well-defined map $P:\R^N\rightarrow \R^N$ such that
    $P(\bm\xi'')=\tilde{\mathbf{x}}$. The map $P$ is linear and
    injective since $\Ker{P}=\{\mathbf{0}\}$, thus it is a
    bijection. If $A$ is the $N\times N$ matrix associated to
    $P^{-1}$, there easily follows
    $RB = \begin{psmallmatrix}0\\A\end{psmallmatrix}$.
    \end{proof}

\begin{lemma}
    \label{lemma:rotation_W}
    Let $U\coloneqq \{\bm\xi_R\in \R^m \,:\, \bm\xi' =
    \mathbf{0}\}$. Then the following holds:
    \begin{enumerate}[\upshape(i)]
      \item $U=f_R(W)=R\Imm{B}$ and the map
        $\left.f_R\right|_{W}:W\rightarrow U$ is a bijection;
      \item $R\bm\xi^*
        = \begin{psmallmatrix}\bm\xi'^*\\\mathbf{0}\end{psmallmatrix}$
        for some $\bm\xi'^*\in \R^{m-N}$.
    \end{enumerate}
\end{lemma}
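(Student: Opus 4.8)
The plan is to lean on the two facts already available: the explicit block form $RB = \begin{psmallmatrix}0\\A\end{psmallmatrix}$ with $A$ invertible, supplied by Lemma~\ref{lemma:equivalence_rotation}, together with the orthogonality of the rotation $R$. Part (i) is then a direct image computation, and part (ii) uses that rotations preserve orthogonality.

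For part (i) I would first compute $f_R(W)$ directly. For any $\bm\xi = B\mathbf{x}+\bm\xi^*\in W$ the translation by $\bm\xi^*$ cancels, so $f_R(\bm\xi)=R(\bm\xi-\bm\xi^*)=RB\mathbf{x}$, whence $f_R(W)=\Imm(RB)=R\,\Imm(B)$. Substituting $RB=\begin{psmallmatrix}0\\A\end{psmallmatrix}$ gives $f_R(\bm\xi)=\begin{psmallmatrix}\mathbf{0}\\A\mathbf{x}\end{psmallmatrix}$, and since $A$ is invertible $A\mathbf{x}$ ranges over all of $\R^N$ as $\mathbf{x}$ ranges over $\R^N$; hence $f_R(W)=U$, proving $U=f_R(W)=R\,\Imm(B)$. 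The bijectivity of $\left.f_R\right|_W\colon W\to U$ is then immediate: $f_R$ is an affine bijection of $\R^m$ (with inverse $\bm\eta\mapsto R^\top\bm\eta+\bm\xi^*$), so its restriction to $W$ is injective, and by the computation just made it is surjective onto $U$.

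For part (ii) the key observation is that an orthogonal $R$ sends orthogonal complements to orthogonal complements, since $\langle R\mathbf{u},R\mathbf{v}\rangle=\langle\mathbf{u},\mathbf{v}\rangle$. Because $\bm\xi^*\in\Imm(B)^\perp$ by the standing assumption, I would conclude $R\bm\xi^*\in\left(R\,\Imm(B)\right)^\perp=U^\perp$. It then only remains to identify $U^\perp$: as $U$ is cut out by the vanishing of the first block $\bm\xi'=\mathbf{0}$, its orthogonal complement is exactly the set of vectors with vanishing second block, giving $R\bm\xi^*=\begin{psmallmatrix}\bm\xi'^*\\\mathbf{0}\end{psmallmatrix}$ for some $\bm\xi'^*\in\R^{m-N}$.

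The only delicate point is part (ii), whose conclusion genuinely requires $R$ to be a rotation rather than merely invertible; this is precisely where the hypothesis $\bm\xi^*\in\Imm(B)^\perp$ is consumed, so I would make sure to invoke $R^\top R=I_m$ explicitly at that step. Everything else reduces to routine linear algebra once Lemma~\ref{lemma:equivalence_rotation} is in hand.
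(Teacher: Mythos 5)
Your proof is correct and takes essentially the same route as the paper's: part (i) rests on the block form $RB=\begin{psmallmatrix}0\\A\end{psmallmatrix}$ with $A$ invertible from Lemma~\ref{lemma:equivalence_rotation}, and part (ii) on the fact that the isometry $R$ carries $\Imm(B)^\perp$ onto $\left(R\,\Imm B\right)^\perp=U^\perp$. The only difference is cosmetic: you obtain the inclusion $U\subseteq f_R(W)$ from the invertibility of $A$ rather than citing condition~\eqref{eq:rotation} directly, and you spell out the bijectivity of $f_R|_W$ and the identification of $U^\perp$ as the vectors with vanishing second block, details the paper leaves implicit.
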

\begin{proof}
    (i) The inclusion $U\subseteq f_R(W)$ is trivial. For the other
    one, let $\bm\xi\in W$, then there exists
    $\tilde{\mathbf{x}}\in \R^N$ such that
    $\bm\xi = B\tilde{\mathbf{x}} + \bm\xi^*$. From Lemma
    \ref{lemma:equivalence_rotation} we have
    $f_R(\bm\xi)=RB\tilde{\mathbf{x}}
    = \begin{psmallmatrix}0\\A\end{psmallmatrix} \tilde{\mathbf{x}}\in
    U$.
    \\[0.15cm]
    (ii) Since $\bm\xi^*\in (\Imm{B})^\perp$ and $R$ is an isometry,
    we have $R\bm\xi^*\in R(\Imm{B})^\perp=(R\Imm{B})^\perp=U^\perp$,
    where the last equality follows from (i).
\qed\end{proof}

\begin{theorem}
    \label{teo:cond_gauss}
    The conditional probability density of $\bm\Xi''$ is
    $N(\mathbf{0},I_N)$, that is    \[
    p_{\bm\Xi''}(\bm\xi'') = \frac{1}{(2\pi)^{N/2}} \exp\left(-\frac 12
    \bm\xi''^\top \bm\xi''\right)
    \]
\end{theorem}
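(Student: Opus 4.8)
The plan is to transport the standard Gaussian of \eqref{eq:res_norm_gauss} through the rotation $f_R$ into the coordinates $(\bm\xi',\bm\xi'')$, to show that in these coordinates the density factors as a product of a Gaussian in $\bm\xi'$ and the density $N(\mathbf{0},I_N)$ in $\bm\xi''$, and then to observe that conditioning on $W$ amounts, via the bijection of Lemma~\ref{lemma:rotation_W}(i), to conditioning on the coordinate subspace $U=\{\bm\xi'=\mathbf{0}\}$. Once the factorization is in place this conditioning is immediate, because it leaves the $\bm\xi''$-factor untouched.

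Concretely, I would first push $p_{\bm\Xi}$ forward through $f_R$. Since $R$ is a rotation, $f_R$ is an isometry with unit Jacobian, so writing $\bm\xi=f_R^{-1}(\bm\xi_R)=R^\top\bm\xi_R+\bm\xi^*$ the transformed density is
\[
    p_{\bm\Xi_R}(\bm\xi_R)=\frac{1}{(2\pi)^{m/2}}\exp\left(-\tfrac12\,\|R^\top\bm\xi_R+\bm\xi^*\|^2\right).
\]
I would then expand the quadratic form using $R^\top R=I_m$ together with the identity $R\bm\xi^*=\begin{psmallmatrix}\bm\xi'^*\\\mathbf{0}\end{psmallmatrix}$ supplied by Lemma~\ref{lemma:rotation_W}(ii). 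The crucial point is that this identity makes the cross term depend on $\bm\xi'$ alone, so that completing the square (and using $\|\bm\xi^*\|=\|\bm\xi'^*\|$, again from isometry) gives
\[
    \|R^\top\bm\xi_R+\bm\xi^*\|^2=\|\bm\xi'+\bm\xi'^*\|^2+\|\bm\xi''\|^2,
\]
whence $p_{\bm\Xi_R}$ factors as $N(-\bm\xi'^*,I_{m-N})(\bm\xi')\cdot N(\mathbf{0},I_N)(\bm\xi'')$. In particular $\bm\Xi'$ and $\bm\Xi''$ are independent.

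Finally I would carry out the conditioning. By Lemma~\ref{lemma:rotation_W}(i), conditioning $\bm\Xi$ on $W$ is the same as conditioning $\bm\Xi_R$ on $U=\{\bm\xi'=\mathbf{0}\}$; fixing $\bm\xi'=\mathbf{0}$ in the factored density and renormalizing removes the (now constant) $\bm\xi'$-factor and returns exactly $N(\mathbf{0},I_N)(\bm\xi'')$, which is the claim. The step I expect to demand the most care is making this last conditioning rigorous, since $W$ is a measure-zero subset of $\R^m$: one must justify that restricting to $\bm\xi'=\mathbf{0}$ and normalizing is the correct notion of conditional density, and verify that the residual prefactor is genuinely independent of $\bm\xi''$. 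The orthogonality hypothesis $\bm\xi^*\in(\Imm B)^\perp$ is doing the essential work throughout, as it is precisely what decouples the two blocks; without it the cross term would couple $\bm\xi'$ and $\bm\xi''$ and shift the conditional law away from $N(\mathbf{0},I_N)$.
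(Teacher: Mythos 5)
Your proposal is correct and takes essentially the same route as the paper's own proof: both push the standard Gaussian through the rotation $f_R$, use Lemma~\ref{lemma:rotation_W} to identify $f_R(W)$ with $U=\{\bm\xi'=\mathbf{0}\}$ and to place $R\bm\xi^*$ entirely in the $\bm\xi'$-block (this is where the orthogonality $\bm\xi^*\in(\Imm B)^\perp$ enters), and then condition by restricting to $\bm\xi'=\mathbf{0}$ and renormalizing. Your completing the square to exhibit the full factorization $N(-\bm\xi'^*,I_{m-N})(\bm\xi')\cdot N(\mathbf{0},I_N)(\bm\xi'')$, and hence the independence of $\bm\Xi'$ and $\bm\Xi''$, is only a mild refinement of the paper's computation, which instead sets $\bm\xi'=\mathbf{0}$ directly in the joint density so that the cross term drops out on $U$.
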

\begin{proof}
  By the standard propagation formula for probability density
  functions under the action of a continuous function, we have that
    \begin{align*}
        p_{f_R(\bm\Xi)}(\bm\xi_R) &= \left|\det R\right| p_{\bm\Xi}(f_R^{-1}(\bm\xi_R)) = \\
        &= \frac{1}{(2\pi)^{m/2}} \exp\left(-\frac 12 (R^\top \bm\xi_R +
          \bm\xi^*)^\top(R^\top \bm\xi_R + \bm\xi^*)\right) =\\
        &= \frac{1}{(2\pi)^{m/2}} \exp\left(-\frac 12
          \left(\bm\xi_R^\top\bm\xi_R + 2\bm\xi_R^\top R\bm\xi^* +
            {\bm\xi^*}^\top\bm\xi^*\right)\right).
    \end{align*}
    The conditional probability density of the variable $f_R(\bm\Xi)$
    on $f_R(W)=U$ (see Lemma~\ref{lemma:rotation_W}-(i)) is obtained
    by using that $\bm\xi'=\mathbf{0}$ and
    Lemma~\ref{lemma:rotation_W}-(ii). We have
    \[
        p_{f_R(\bm\Xi)|f_R(W)}(\bm\xi'') =
        \frac{\frac{1}{(2\pi)^{m/2}} \exp\left(-\frac 12
            \left(\bm\xi''^\top\bm\xi''+{\bm\xi^*}^\top\bm\xi^*\right)\right)}{\int_{\R^N}
          \frac{1}{(2\pi)^{m/2}} \exp\left(-\frac 12
            \left(\bm\xi''^\top\bm\xi''+{\bm\xi^*}^\top\bm\xi^*\right)\right)\dd\bm\xi''}=
        \frac{1}{(2\pi)^{N/2}} \exp\left(-\frac 12 \bm\xi''^\top
          \bm\xi''\right),
    \]
    where we have used that $\int_{\R^n} \exp\left(-\frac 12
      \mathbf{y}^\top \mathbf{y}\right)\dd\mathbf{y} = (2\pi)^{n/2}$
    for all $n\geq 1$. Now the thesis follows by noting that the
    random variable $f_R(\bm\Xi)|f_R(W)$ coincides with $\bm\Xi''$.
\qed\end{proof}

\begin{corollary}
    \label{cor:cond_gauss}
    The conditional probability density of $\bm\Xi$ on $W$ is
    \[
    p_{\bm\Xi|W}(\bm\xi) = \frac{1}{(2\pi)^{N/2}}\exp\left(-\frac 12
    (\bm\xi-\bm\xi^*)^\top(\bm\xi-\bm\xi^*)\right),
    \]
    where $\bm\xi\in W$.
\end{corollary}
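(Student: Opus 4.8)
The plan is to transport the result of Theorem~\ref{teo:cond_gauss} from $U$ back to $W$ through the bijection $\left.f_R\right|_W:W\rightarrow U$ established in Lemma~\ref{lemma:rotation_W}-(i). Since $f_R(\bm\xi)=R(\bm\xi-\bm\xi^*)$ with $R$ a rotation matrix, $f_R$ is a rigid motion of $\R^m$, hence an isometry; its restriction to the $N$-dimensional affine subspace $W$ is therefore a bijective isometry onto $U$, and in particular has Jacobian determinant equal to $1$. This means that the change-of-variables factor in the propagation formula is trivial, so $p_{\bm\Xi|W}$ is obtained from the density $p_{\bm\Xi''}$ of Theorem~\ref{teo:cond_gauss} simply by rewriting the latter in terms of $\bm\xi\in W$.

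The core computation is to express the quadratic form appearing in $p_{\bm\Xi''}$ in the coordinates of $W$. For $\bm\xi\in W$ we have, by Lemma~\ref{lemma:rotation_W}-(i), that $f_R(\bm\xi)\in U$, so its first block vanishes and $f_R(\bm\xi)=\begin{psmallmatrix}\mathbf{0}\\\bm\xi''\end{psmallmatrix}$ for the corresponding $\bm\xi''\in\R^N$. Because $R$ is orthogonal,
\[
\bm\xi''^\top\bm\xi'' = \left\|f_R(\bm\xi)\right\|^2 = \left\|R(\bm\xi-\bm\xi^*)\right\|^2 = (\bm\xi-\bm\xi^*)^\top(\bm\xi-\bm\xi^*).
\]
Substituting this identity into $p_{\bm\Xi''}(\bm\xi'')=(2\pi)^{-N/2}\exp\!\left(-\tfrac12\bm\xi''^\top\bm\xi''\right)$ yields exactly the claimed expression for $p_{\bm\Xi|W}(\bm\xi)$.

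The only point requiring care, and the main (mild) obstacle, is justifying that no Jacobian factor enters when passing from the $\bm\xi''$-parametrization of $U$ to points of $W$. I would make this precise by noting that $\left.f_R\right|_W$ is a bijective affine isometry between two $N$-dimensional Euclidean spaces, so it preserves the $N$-dimensional Lebesgue (Hausdorff) measure; consequently the conditional density, defined with respect to that measure, is simply pulled back along $f_R$ without any determinant correction. Everything else reduces to the elementary substitution above together with the normalising constant $(2\pi)^{N/2}$ already identified in the proof of Theorem~\ref{teo:cond_gauss}.
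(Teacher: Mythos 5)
Your proposal is correct and follows essentially the same route as the paper: transporting the density of Theorem~\ref{teo:cond_gauss} back to $W$ via the bijection $\left.f_R\right|_W$ from Lemma~\ref{lemma:rotation_W}-(i) and using orthogonality of $R$ to rewrite $\bm\xi''^\top\bm\xi''$ as $(\bm\xi-\bm\xi^*)^\top(\bm\xi-\bm\xi^*)$. In fact you are slightly more careful than the paper's terse proof, since you explicitly justify the absence of a Jacobian factor by noting that the affine isometry $\left.f_R\right|_W$ preserves the $N$-dimensional Hausdorff measure, a point the paper leaves implicit.
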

\begin{proof}
    From Lemma~\ref{lemma:rotation_W}-(i) we know that $f_R|_W$ is a
    bijection. Thus for all $\bm\xi''\in \R^N$ there exists a unique
    $\bm\xi\in W$ such that
    $\begin{psmallmatrix}\mathbf{0}\\\bm\xi''\end{psmallmatrix}=f_R(\bm\xi)=R(\bm\xi-\bm\xi^*)$. Now
    it suffices to use this fact in the equation for
    $p_{\bm\Xi''}(\bm\xi'')$ in Theorem~\ref{teo:cond_gauss}.
\qed\end{proof}

\subsection{Computation of the probability density}
\label{sec:pdf}

We first derive the probability density function on the Manifold Of
Variations from that on the normalised residuals space. The
computation is based on the linearisation of the map $F$ around the
best-fitting orbit. In Section~\ref{sec:prob_full_nonlin} we will
instead show the result of the full non-linear propagation of the
probability density function and discuss the reason for which this
approach, although correct, is not a suitable choice for applications.

\begin{theorem}\label{thm:res-mov}
    Let $\mathbf{X}$ be the random variable of $\mathcal{X}$. By
    linearising $F$ around the best-fitting orbit $\mathbf{x}^*$, the
    conditional probability density of $\mathbf{X}$ on
    $T_{\mathbf{x}^*}\mathcal{M}$ is given by
  \begin{equation}\label{eq:p_X}
    p_{\mathbf{X}|T_{\mathbf{x}^*}\mathcal{M}}(\mathbf{x}) =
    \dfrac{\displaystyle\exp\left(-\frac{\chi^2(\mathbf{x})}{2}\right)}
    {\displaystyle\int_{T_{\mathbf{x}^*}\mathcal{M}}
      \exp\left(-\frac{\chi^2(\mathbf{y})}{2}\right)\dd\mathbf{y}}.
  \end{equation}
\end{theorem}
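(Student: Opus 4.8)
The plan is to transport the Gaussian residual density \eqref{eq:res_norm_gauss} through the linearisation of $F$ at the best-fitting orbit $\mathbf{x}^*$ and then to invoke Corollary~\ref{cor:cond_gauss} with $N=2$. First I would write the first-order expansion $F(\mathbf{x}) \approx \bm\xi^* + B(\mathbf{x}-\mathbf{x}^*)$ for $\mathbf{x}$ in the affine tangent plane $T_{\mathbf{x}^*}\mathcal{M}$, where $\bm\xi^* \coloneqq F(\mathbf{x}^*)$ and $B$ is the restriction of $(DF)_{\mathbf{x}^*}$ to $T_{\mathbf{x}^*}\mathcal{M}$ (equivalently $B = (DF)_{\mathbf{x}^*}(Df_\mu)_{\bm\rho^*}$, an $m\times 2$ matrix, of full rank since $f_\mu$ is an immersion by Lemma~\ref{prop:MOVmanifold} and $F$ separates nearby orbits to first order). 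Under this linearisation the image $W \coloneqq F(T_{\mathbf{x}^*}\mathcal{M})$ is precisely the affine subspace $\Imm(B)+\bm\xi^*$ treated in Section~\ref{sec:gauss-cond}.

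The crucial preliminary step is to verify that $\bm\xi^*$ is orthogonal to $\Imm(B)$, which is the hypothesis under which Corollary~\ref{cor:cond_gauss} was established. I would deduce this from the defining property of the best-fitting orbit: since $\mathbf{x}^*$ minimises $Q$ over $K'$ and $f_\mu$ parametrizes $\mathcal{M}$, the point $\mathbf{x}^*$ is stationary for $Q$ restricted to $\mathcal{M}$, so the directional derivative of $Q = \frac 1m \bm\xi^\top\bm\xi$ vanishes along every tangent direction $\mathbf{v}\in T_{\mathbf{x}^*}\mathcal{M}$. As this derivative equals $\frac 2m \bm\xi^{*\top}(DF)_{\mathbf{x}^*}\mathbf{v}$, we obtain $\bm\xi^{*\top}B = 0$, i.e. $\bm\xi^*\in\Imm(B)^\perp$, as required.

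With orthogonality in hand I would apply Corollary~\ref{cor:cond_gauss} to obtain the conditional density on $W$, namely $(2\pi)^{-1}\exp\bigl(-\tfrac 12 (\bm\xi-\bm\xi^*)^\top(\bm\xi-\bm\xi^*)\bigr)$. The next step is to identify the exponent with $\chi^2$: expanding $\bm\xi^\top\bm\xi$ along the linearisation and using $\bm\xi^{*\top}B=0$ to annihilate the cross term gives $\bm\xi^\top\bm\xi = \bm\xi^{*\top}\bm\xi^* + (\bm\xi-\bm\xi^*)^\top(\bm\xi-\bm\xi^*)$, whence $(\bm\xi-\bm\xi^*)^\top(\bm\xi-\bm\xi^*) = \bm\xi^\top\bm\xi - \bm\xi^{*\top}\bm\xi^* = m(Q(\mathbf{x})-Q^*) = \chi^2(\mathbf{x})$ by \eqref{chimov}. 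Hence the density on $W$, read through the bijection $\mathbf{x}\mapsto\bm\xi$, is proportional to $\exp(-\chi^2(\mathbf{x})/2)$.

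Finally I would pull this density back from $W$ to $T_{\mathbf{x}^*}\mathcal{M}$. The map $\mathbf{x}\mapsto\bm\xi^*+B(\mathbf{x}-\mathbf{x}^*)$ is an affine bijection with constant Jacobian, so the pulled-back density differs from $\exp(-\chi^2(\mathbf{x})/2)$ only by a multiplicative constant; after dividing by the total mass this constant cancels, yielding \eqref{eq:p_X}. I expect the only delicate point to be the orthogonality $\bm\xi^*\perp\Imm(B)$: everything downstream (the vanishing of the cross term, the emergence of $\chi^2$, and the constant Jacobian) is then routine, but without that stationarity property the exponent would fail to collapse to $\chi^2$ and Corollary~\ref{cor:cond_gauss} could not be applied directly.
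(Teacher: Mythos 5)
Your proof is correct and follows essentially the same route as the paper: linearise $F$ at the best-fitting orbit, use stationarity of $Q$ at $\mathbf{x}^*$ to obtain $\bm\xi^*\in\Imm(B)^\perp$, apply Corollary~\ref{cor:cond_gauss} (with $N=2$) to get the Gaussian conditional density on the image plane, identify the exponent with $\chi^2(\mathbf{x})$ in the same quadratic approximation, and normalise. If anything your write-up is marginally tidier on two minor points: you work with the $m\times 2$ restriction of $(DF)_{\mathbf{x}^*}$ to $T_{\mathbf{x}^*}\mathcal{M}$ from the outset (so only tangential stationarity of $Q$ along $\mathcal{M}$ is needed, rather than the vanishing of the full gradient invoked in the paper) and you let the constant Jacobian of the affine bijection cancel in the normalisation, whereas the paper carries the explicit factor $\sqrt{\det C(\mathbf{x}^*)}/(2\pi)$ only to discard it at the same final step.
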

\begin{proof}
    The map $F$ is differentiable of class at least $C^1$ and the
    Jacobian matrix of $F$ is the design matrix
    $B(\mathbf{x})=\frac{\partial F}{\partial
      \mathbf{x}}(\mathbf{x})\in \Mat_{m,N}(\R)$. Since the doubly
    constrained differential corrections converge to $\mathbf{x}^*$,
    the matrix $B(\mathbf{x}^*)$ is full rank. It follows that the map
    $F$ is a local parametrization of
    \[
        V\coloneqq F(\mathcal{M}) = \{\bm\xi\in \R^m\,:\,
        \bm\xi=F(\mathbf{x}),\ \mathbf{x}\in \mathcal{M}\},
    \]
    in a suitable neighbourhood of
    $\bm\xi^*\coloneqq F(\mathbf{x}^*)=\xi(\mathbf{x}^*)$. The set $V$
    is thus a 2-dimensional submanifold of the residuals space $\R^m$.
    Consider the differential
    \[
        DF_{\mathbf{x}^*}: T_{\mathbf{x}^*}\mathcal{M} \rightarrow
        T_{\bm\xi^*}V,
    \]
    where $T_{\bm\xi^*}V = \{\bm\xi\in \R^m \,:\,
    \bm\xi=\bm\xi^*+B(\mathbf{x}^*)(\mathbf{x}-\mathbf{x^*}),\,
    \mathbf{x}\in T_{\mathbf{x}^*}\mathcal{M}\}$ is a 2-dimensional
    affine subspace of $\R^m$. We claim that $\bm\xi^*$ is orthogonal to
    $T_{\bm\xi^*}V$: since $\mathbf{x}^*$ is a local minimum of the
    target function $Q$
    \[
        \mathbf{0} = \frac{\partial Q}{\partial \mathbf{x}}(\mathbf{x}^*)
        = \frac{2}{m}\bm\xi(\mathbf{x}^*)^\top B(\mathbf{x}^*),
    \]
    that is
    $\bm\xi(\mathbf{x}^*) = \bm\xi^* \in
    \Imm\left(B(\mathbf{x}^*)\right)^\perp$. By applying
    Corollary~\ref{cor:cond_gauss} we have that
    \[
        p_{\bm\Xi|T_{\bm\xi^*}V}(\bm\xi) = \frac{1}{2\pi}\exp\left(-\frac 12
          (\bm\xi-\bm\xi^*)^\top(\bm\xi-\bm\xi^*)\right)
    \]
    for $\bm\xi\in T_{\bm\xi^*}V$. The differential map is continuous
    and invertible (since it is represented by the matrix
    $A(\mathbf{x}^*)$, as in
    Lemma~\ref{lemma:equivalence_rotation}), thus we can use the
    standard formula for the transformations of random variables to
    obtain
    \[
        p_{\mathbf{X}|T_{\mathbf{x}^*}\mathcal{M}}(\mathbf{x}) =
        \frac{\sqrt{\det C(\mathbf{x}^*)}}{2\pi}\exp\left(-\frac 12
          (\mathbf{x}-\mathbf{x}^*)^\top
          C(\mathbf{x}^*)(\mathbf{x}-\mathbf{x}^*)\right),
    \]
    for $\mathbf{x}\in T_{\mathbf{x}^*}\mathcal{M}$, where
    $C(\mathbf{x}^*)$ is the $6\times 6$ normal matrix of the
    differential corrections converging to
    $\mathbf{x}^*$. Furthermore, in the approximation used, we have
    \[
        \chi^2(\mathbf{x}) =
        mQ(\mathbf{x})-mQ^*=(\mathbf{x}-\mathbf{x}^*)^\top
        C(\mathbf{x}^*)(\mathbf{x}-\mathbf{x}^*),
    \]
    so that for $\mathbf{x}\in T_{\mathbf{x}^*}\mathcal{M}$
    \[
        p_{\mathbf{X}|T_{\mathbf{x}^*}\mathcal{M}}(\mathbf{x}) =
        \frac{\sqrt{\det
            C(\mathbf{x}^*)}}{2\pi}\exp\left(-\frac{\chi^2(\mathbf{x})}{2}\right).
    \]
    Now the thesis follows by normalising the density just obtained.
\qed\end{proof}

We now complete the derivation of the probability density function on
the space $S$. In particular, to obtain the density on the space $R$
we use the notion of integration over a manifold because we want to
propagate a probability density on a manifold to a probability density
over the space that parametrizes the manifold itself (see
Theorem~\ref{thm:mov-ar}). Then the last step involves two spaces with
the same dimension, namely $R$ and $S$, thus we simply apply standard
propagation results from probability density functions (see
Theorem~\ref{thm:p_X}).

\begin{theorem}
    \label{thm:mov-ar}
    Let $\mathbf{R}$ be the random variable on the space
    $R$. Assuming~\eqref{eq:p_X} to be the probability density
    function on $\mathcal{M}$, the probability density function of
    $\mathbf{R}$ is
    \begin{equation*}
        p_{\mathbf{R}}(\bm\rho) = \dfrac{\displaystyle \exp\left(
            -\frac{\chi^2(\bm\rho)}{2}\right)
          \sqrt{G_{\mu}(\bm\rho)} } {\displaystyle\int_{K'}
          \exp\left(-\frac{\chi^2(\bm\rho)}{2}\right)
          \sqrt{G_{\mu}(\bm\rho)}\dd\bm\rho},
    \end{equation*}
    where $\chi^2(\bm\rho) = \chi^2(\mathbf{x}(\bm\rho))$ and
    $G_{\mu}$ is the Gramian determinant
    \[
        G_{\mu}(\bm\rho)=\det \left(I_2 + \left(\frac{\de
              \mathcal A^*}{\de\bm\rho}(\bm\rho)\right)^\top  \frac{\de
            \mathcal A^*}{\de\bm\rho}(\bm\rho)\right).
    \]
    Moreover, neglecting terms containing the second derivatives of
    the residuals multiplied by the residuals themselves, for all
    $\bm\rho\in K'$ we have
    \[
        \frac{\partial \mathcal A^*}{\partial \bm\rho}(\bm\rho) = -
        C_{\mathcal A}(\mathcal A^*(\bm\rho),\bm\rho)^{-1}B_{\mathcal
          A}(\mathcal A^*(\bm\rho),\bm\rho)^\top B_{\bm\rho}(\mathcal
        A^*(\bm\rho),\bm\rho).
    \]
\end{theorem}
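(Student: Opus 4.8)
The plan is to treat the two assertions separately: first the change of variables that produces the Gramian weight, then the implicit-function computation of $\de\mathcal{A}^*/\de\bm\rho$.

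For the density formula I would start from the global parametrization $f_\mu:K'\to\mathcal{M}$ of Lemma~\ref{prop:MOVmanifold}, whose differential is given explicitly by \eqref{eq:diff_f_M}. The surface (Hausdorff) measure on the $2$-dimensional submanifold $\mathcal{M}\subseteq\mathcal{X}$ pulls back under $f_\mu$ to $\sqrt{\det\bigl((Df_\mu)^\top(Df_\mu)\bigr)}\dd\bm\rho$ on $K'$, and a direct block computation using \eqref{eq:diff_f_M},
\[
(Df_\mu)_{\bm\rho}^\top(Df_\mu)_{\bm\rho}=I_2+\left(\frac{\de\mathcal{A}^*}{\de\bm\rho}(\bm\rho)\right)^\top\frac{\de\mathcal{A}^*}{\de\bm\rho}(\bm\rho),
\]
identifies this Jacobian factor with $\sqrt{G_\mu(\bm\rho)}$. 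Since $\mathbf{R}=f_\mu^{-1}(\mathbf{X})$ and $f_\mu$ is a bijection onto $\mathcal{M}$, the standard change of variables for a density defined against the surface measure gives $p_{\mathbf{R}}(\bm\rho)=p_{\mathbf{X}}(f_\mu(\bm\rho))\sqrt{G_\mu(\bm\rho)}$. Taking \eqref{eq:p_X} as the density on $\mathcal{M}$ (so proportional to $\exp(-\chi^2/2)$, with $\chi^2(\bm\rho):=\chi^2(f_\mu(\bm\rho))$) and rewriting the normalizing integral over $\mathcal{M}$ as one over $K'$ by the same identity $\dd\mathcal{M}=\sqrt{G_\mu}\dd\bm\rho$ then yields the claimed expression, the factor $\sqrt{G_\mu}$ appearing in both numerator and denominator.

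For the second assertion I would apply the implicit function theorem to the first-order optimality condition characterising $\mathcal{A}^*(\bm\rho)$. As $\mathcal{A}^*(\bm\rho)$ is a local minimum of $\mathcal{A}\mapsto Q(\mathcal{A},\bm\rho)$, it satisfies $\de Q/\de\mathcal{A}=0$, i.e.\ (with normalized residuals and the notation of \eqref{C_A}) the identity $B_{\mathcal{A}}^\top\bm\xi=\mathbf{0}$ in $\bm\rho$. Differentiating along the solution and using the chain rule gives
\[
\frac{\de(B_{\mathcal{A}}^\top\bm\xi)}{\de\mathcal{A}}\,\frac{\de\mathcal{A}^*}{\de\bm\rho}+\frac{\de(B_{\mathcal{A}}^\top\bm\xi)}{\de\bm\rho}=\mathbf{0}.
\]
Each coefficient block splits into a term in which $B_{\mathcal{A}}$ is differentiated, producing second derivatives of $\bm\xi$ contracted against $\bm\xi$, plus a term in which $\bm\xi$ is differentiated. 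Dropping the former, as the statement permits, leaves $\de(B_{\mathcal{A}}^\top\bm\xi)/\de\mathcal{A}\approx B_{\mathcal{A}}^\top B_{\mathcal{A}}=C_{\mathcal{A}}$ and $\de(B_{\mathcal{A}}^\top\bm\xi)/\de\bm\rho\approx B_{\mathcal{A}}^\top B_{\bm\rho}$, where $B_{\bm\rho}:=\de\bm\xi/\de\bm\rho$. Since $C_{\mathcal{A}}$ is invertible at convergence, solving for the derivative gives the stated formula.

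The main obstacle is conceptual rather than computational: making precise the sense in which \eqref{eq:p_X}, originally the density on the tangent space $T_{\mathbf{x}^*}\mathcal{M}$, is used here as a density on $\mathcal{M}$ against its surface measure, so that the area formula applies and $\sqrt{G_\mu}$ enters numerator and denominator consistently. The only delicate point in the second assertion is justifying the neglect of the second-derivative-times-residual terms; this is the same linearization already adopted in Theorem~\ref{thm:res-mov}, legitimate near the best-fitting orbit where the residuals $\bm\xi$ are small.
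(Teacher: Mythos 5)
Your proposal is correct and follows essentially the same route as the paper's proof: the Gramian change of variables through the global parametrization $f_\mu$ (using \eqref{eq:diff_f_M} for the block computation of $(Df_\mu)^\top Df_\mu$) for the density formula, and the implicit function theorem applied to the stationarity condition $B_{\mathcal A}^\top\bm\xi=\mathbf{0}$, with the same neglect of second-derivative-times-residual terms, for $\partial\mathcal A^*/\partial\bm\rho$. The conceptual point you flag at the end is resolved in the paper exactly as you suggest: the statement itself assumes \eqref{eq:p_X} to be the density on $\mathcal{M}$, so its use against the surface measure is a hypothesis of the theorem rather than something the proof must justify.
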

\begin{proof}
    We have already proved that the map
    $f_{\mu}:K'\rightarrow \mathcal{M}$ is a global parametrization
    of $\mathcal{M}$. From the definition of integral of a function
    over a manifold we have
    \[
        p_{\mathbf{R}}(\bm\rho) =
        p_{\mathbf{X}|T_{\mathbf{x}^*}\mathcal{M}} (f_{\mu}(\bm\rho))
        \cdot \sqrt{\det \left[\frac{\partial f_{\mu}}{\partial
              \bm\rho}(\bm\rho)\right]^\top \frac{\partial
            f_{\mu}}{\partial \bm\rho}(\bm\rho)}
    \]
    and the thesis follows since from Equation~\eqref{eq:diff_f_M}
    \[
        \left[\frac{\partial f_{\mu}}{\partial
            \bm\rho}(\bm\rho)\right]^\top \frac{\partial
          f_{\mu}}{\partial \bm\rho}(\bm\rho) = I_2+
        \left(\dfrac{\partial\mathcal
            A^*}{\partial\bm\rho}(\bm\rho)\right)^\top
        \dfrac{\partial\mathcal A^*}{\partial\bm\rho}(\bm\rho).
    \]
    The equation for
    $\frac{\partial \mathcal A^*}{\partial \bm\rho}(\bm\rho)$ is
    proved in \cite{spoto:immimp}, Appendix A, but for the sake of
    completeness we repeat the argument here. Let $\bm\rho\in K'$. By
    definition, the point
    $\mathbf{x}(\bm\rho)=(\mathcal A^*(\bm\rho), \bm\rho)\in
    \mathcal{M}$ is a zero of the function
    $g:\mathcal{X}\rightarrow \R$ defined to be
    \[
        g(\mathbf{x})\coloneqq \frac{m}{2} \frac{\partial
          Q}{\partial\mathcal A}(\mathbf{x}) = B_\mathcal
        A(\mathbf{x})^\top \bm\xi(\mathbf{x}).
    \]
    The function $g$ is continuously differentiable and we have
    \[
        \frac{\partial g}{\partial \mathcal A}(\mathbf{x})
        =
        \frac{\partial}{\partial \mathcal A} \left(\frac{\partial
            \bm\xi}{\partial \mathcal A}(\mathbf{x})\right)^\top
        \bm\xi(\mathbf{x}) + \left(\frac{\partial \bm\xi}{\partial
            \mathcal A} (\mathbf{x})\right)^\top
        \frac{\partial\bm\xi}{\partial\mathcal A} (\mathbf{x})
        \simeq \left(\frac{\partial \bm\xi}{\partial \mathcal A}
          (\mathbf{x})\right)^\top
        \frac{\partial\bm\xi}{\partial\mathcal A} (\mathbf{x}) =
        C_{\mathcal A}(\mathbf{x}),
    \]
    where we neglected terms containing the second derivatives of the
    residuals multiplied by the residuals themselves. The matrix
    $C_{\mathcal A}(\mathbf{x}(\bm\rho))$ is invertible because
    $\bm\rho\in K'$, which means that the doubly constrained
    differential corrections did not fail. By applying the implicit
    function theorem, there exists a neighbourhood $U$ of $\bm\rho$, a
    neighbourhood $W$ of $\mathcal A^*(\bm\rho)$, a continuously
    differentiable function $\bold f:U\rightarrow W$ such that, for
    all $\widetilde{\bm\rho}\in U$ it holds
    \[
        g(\mathcal A^*,\widetilde{\bm\rho})=\bold 0 \Leftrightarrow
        \mathcal A^*= \bold f(\widetilde{\bm\rho}),
    \]
    and
    \begin{equation}\label{eq:Dini}
        \frac{\partial \bold f}{\partial \bm\rho}(\widetilde{\bm\rho}) =
        -\left(\frac{\partial g}{\partial \mathcal A} (\mathcal
          A^*(\widetilde{\bm\rho}),\widetilde{\bm\rho})\right)^{-1}
        \frac{\partial g}{\partial \bm\rho}(\mathcal
        A^*(\widetilde{\bm\rho}),\widetilde{\bm\rho}).
    \end{equation}
    The derivative $\frac{\partial g}{\partial \mathcal A}$ is already
    computed above. For
    $\frac{\partial g}{\partial \bm\rho}(\mathbf{x})$ we have
    \[
        \frac{\partial g}{\partial \bm\rho}(\mathbf{x}) =
        \frac{\partial}{\partial \bm\rho} \left(\frac{\partial \bm\xi}
          {\partial \mathcal A}(\mathbf{x})\right)^\top
        \bm\xi(\mathbf{x}) + \left(\frac{\partial \bm\xi}{\partial
            \mathcal A}(\mathbf{x})\right)^\top
        \frac{\partial\bm\xi}{\partial\bm\rho}(\mathbf{x})\simeq
        \left(\frac{\partial \bm\xi}{\partial \mathcal A}
          (\mathbf{x})\right)^\top
        \frac{\partial\bm\xi}{\partial\bm\rho}(\mathbf{x}) =
        B_{\mathcal A}(\mathbf{x})^\top B_{\bm\rho}(\mathbf{x}).
    \]
    The thesis now follows from Equation~\eqref{eq:Dini}.
\qed\end{proof}

The final step in the propagation of the probability density function
consists in applying the map $f_\sigma:S'\rightarrow K'$, where
$S'\coloneqq f_{\sigma}^{-1}(K')$ is the portion of the sampling space
mapped onto $K'$.

\begin{theorem}
  \label{thm:p_X}
  Let $\mathbf{S}$ be the random variable on the space
  $S$. Assuming~\eqref{eq:p_X}, the probability density function of
  $\mathbf{S}$ is
  \begin{equation}\label{eq:p_S}
    p_{\mathbf{S}}(\mathbf{s}) = \dfrac{\displaystyle \exp\left(
        -\frac{\chi^2(\mathbf{s})}{2}\right)
      \sqrt{G_{\mu}(\mathbf{s})} \sqrt{G_{\sigma}(\mathbf{s})}}
    {\displaystyle\int_{f_\sigma^{-1}(K')}
      \exp\left(-\frac{\chi^2(\mathbf{s})}{2}\right)
      \sqrt{G_{\mu}(\mathbf{s})} \sqrt{G_{\sigma}(\mathbf{s})} \dd\mathbf{s}},
  \end{equation}
  where we used the compact notation
  $\chi^2(\mathbf{s}) = \chi^2(\mathbf{x}(\bm\rho(\mathbf{s})))$,
  $G_{\mu}(\mathbf{s}) = G_{\mu}(\bm\rho(\mathbf{s}))$, and
  $G_{\sigma}$ is the Gramian of the columns of
  $Df_{\sigma}(\mathbf{s})$, so that
  \begin{equation*}
    \sqrt{G_{\sigma}(\mathbf{s})} = |\det Df_\sigma(\mathbf{s})|.
  \end{equation*}
\end{theorem}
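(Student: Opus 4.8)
The plan is to exploit the fact that this is the only step in the chain of maps where the source and target have the same dimension: both $S$ and $R$ are two-dimensional. Consequently, unlike the passage from $\mathcal{M}$ to $R$ carried out in Theorem~\ref{thm:mov-ar}, no integration over a manifold of lower dimension is needed here, and the density transforms by the classical Jacobian change-of-variables formula for diffeomorphisms. The whole argument therefore reduces to verifying that $f_\sigma$ is a legitimate change of variables on the relevant domain and to identifying its Jacobian determinant with $\sqrt{G_\sigma}$.

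First I would check that $f_\sigma$ restricts to a diffeomorphism from $S'=f_\sigma^{-1}(K')$ onto $K'$ by inspecting the three cases of the definition. In case (i) the map is the identity, with $Df_\sigma=I_2$. In case (ii), $f_\sigma(x,y)=(10^x,y)$ has Jacobian matrix $\mathrm{diag}(10^x\ln 10,\,1)$, invertible everywhere. In case (iii) the map is an ellipse-valued ``polar'' parametrization centred at $(\rho^*,\dot\rho^*)$; its differential is invertible away from $r=0$, and the locus $r=0$ has measure zero, so it affects neither the density nor any of the integrals. In all cases $f_\sigma$ is thus a valid change of variables off a negligible set.

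Then I would apply the standard density propagation formula under a diffeomorphism between equidimensional spaces, namely $p_{\mathbf{S}}(\mathbf{s}) = p_{\mathbf{R}}(f_\sigma(\mathbf{s}))\,|\det Df_\sigma(\mathbf{s})|$, and substitute the expression for $p_{\mathbf{R}}$ from Theorem~\ref{thm:mov-ar}, which under $\bm\rho=f_\sigma(\mathbf{s})$ supplies the factors $\exp(-\chi^2(\mathbf{s})/2)\sqrt{G_\mu(\mathbf{s})}$. The only genuinely new computation is the identification $|\det Df_\sigma(\mathbf{s})| = \sqrt{G_\sigma(\mathbf{s})}$: since $Df_\sigma$ is a square $2\times 2$ matrix $M$, its column Gramian is $\det(M^\top M)=(\det M)^2$, whence $\sqrt{G_\sigma}=|\det M|$, giving the stated numerator. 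To match the normalising constant I would perform the same change of variables inside the integral normalising $p_{\mathbf{R}}$, turning $\int_{K'}(\cdots)\dd\bm\rho$ into $\int_{f_\sigma^{-1}(K')}(\cdots)\,|\det Df_\sigma|\dd\mathbf{s}$, which is exactly the denominator of~\eqref{eq:p_S} once $|\det Df_\sigma|$ is rewritten as $\sqrt{G_\sigma}$. The main (and only mild) obstacle is the diffeomorphism verification in the cobweb case, where the polar-type singularity at $r=0$ must be argued to be negligible; everything else is a direct application of the equidimensional change-of-variables theorem together with the square-matrix Gramian identity.
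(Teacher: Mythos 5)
Your proposal is correct and follows essentially the same route as the paper, whose proof is a terser statement of exactly this argument: apply the equidimensional transformation law $p_{\mathbf{S}}(\mathbf{s}) = p_{\mathbf{R}}(f_\sigma(\mathbf{s}))\,\lvert\det Df_\sigma(\mathbf{s})\rvert$ and change variables in the normalising integral. Your added details --- the case-by-case invertibility check of $f_\sigma$ (including the negligible $r=0$ locus in the cobweb case) and the identity $\sqrt{G_\sigma} = \sqrt{\det(M^\top M)} = \lvert\det M\rvert$ for the square matrix $M = Df_\sigma$ --- merely make explicit what the paper leaves implicit.
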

\begin{proof}
    Equation~\eqref{eq:p_S} directly follows from the transformation
    law for random variables between spaces of the same dimension,
    under the action of the continuous function $f_\sigma$: it
    suffices to change the variables from the old ones to the new ones
    and multiply by the modulus of the determinant of the Jacobian of
    the inverse transformation, that is $f_{\sigma}$.  \qed\end{proof}

\noindent The determinant $\det Df_\sigma(\mathbf{s})$ depends on the
sampling technique and it is explicitly computed in
\cite{spoto:immimp}, Appendix A. In particular, the straightforward
computation yields the following:
\begin{enumerate}
  \item if the sampling is uniform in $\rho$ then
    $\det Df_\sigma(\mathbf{s})=1$ for all $\mathbf{s}\in S=\R_{\geq 0}\times \R$;
  \item if the sampling is uniform in the logarithm of $\rho$ then
    $\det Df_\sigma(\mathbf{s})=\log(10)\rho(\mathbf{s})$ for all
    $\mathbf{s}\in S=\R^2$;
  \item in the case of the spider web sampling
    $\det Df_\sigma(\mathbf{s}) = r\sqrt{\lambda_1\lambda_2}$ for all
    $\mathbf{s}\in S= \R_{\geq 0}\times \mathbb{S}^1$.
\end{enumerate}

For the sake of completeness, we now recall how the density
$p_{\mathbf{S}}$ is used for the computation of the impact probability
of a potential impactor. Each orbit of the MOV sampling is propagated
for 30 days, recording each close approach as a point on the Modified
Target Plane \citep{milani:ident2}, so that we are able to identify
which orbits are impactors. Let $\mathcal V\subseteq \mathcal{M}$ be
the subset of impacting orbits, so that
$f_{\sigma}^{-1}(f_{\mu}^{-1}(\mathcal{V}))\subseteq S$ is the
corresponding subset of sampling variables. The impact probability is
then computed as
\[
    \int_{\mathcal V}
    p_{\mathbf{S}}(\mathbf{s})\dd\mathbf{s}=\frac{\displaystyle
      \int_{f_{\sigma}^{-1}(f_{\mu}^{-1}(\mathcal{V}))}\exp\left(-\frac{\chi^2(\mathbf{s})}{2}\right)
      \sqrt{G_{\mu}(\mathbf{s})} \sqrt{G_{\sigma}(\mathbf{s})}
      \dd\mathbf{s}} {\displaystyle\int_{f_{\sigma}^{-1}(K')}
      \exp\left(-\frac{\chi^2(\mathbf{s})}{2}\right)
      \sqrt{G_{\mu}(\mathbf{s})} \sqrt{G_{\sigma}(\mathbf{s})}
      \dd\mathbf{s}}.
\]
Of course the above integrals are evaluated numerically as finite sums
over the integration domains: since they are subsets of $S$ we use the
already available sampling described in Section~\ref{sec:armov}. In
particular, we limit the sums to the sample points corresponding to
MOV orbits with a $\chi$-value less than $5$. As pointed out in
\cite{spoto:immimp}, this choice guarantees to NEOScan to find all the
impacting regions with a probability $>10^{-3}$, the so-called
completeness level of the system \citep{delvigna:compl_IM}.


\section{Full non-linear propagation}
\label{sec:prob_full_nonlin}

In this section we compute the probability density function on the
space $R$ obtained by a full non-linear propagation of the probability
density on the residuals space. In particular, the map $F$ is not
linearised around $\mathbf{x}^*$ as we did in
Theorem~\ref{thm:p_X}. This causes the inclusion in
Equation~\eqref{eq:p_X} of the contribution of the normal matrix $C$
as it varies along the MOV and not the fixed contribution
$C(\mathbf{x}^*)$ coming from the orbit $\mathbf{x}^*$ with the
minimum value of $\chi^2$. With the inclusion of all the non-linear
terms the resulting density of $\mathbf{R}$ has the same form as the
Jeffreys' prior and is thus affected by the same pathology discussed
in \cite{farnocchia2015}. This is the motivation for which we adopted
the approach presented in Section~\ref{sec:pdf}.

\begin{theorem}
    The probability density function of the variable $\mathbf{R}$
    resulting from a full non-linear propagation of the probability
    density of $\bm\Xi$ is
  \begin{equation*}
    p_{\mathbf{R}}(\bm\rho) = \dfrac{\displaystyle \exp\left(
        -\frac{\chi^2(\bm\rho)}{2}\right)
      \sqrt{\det C^{\bm\rho\bm\rho}(\bm\rho)}} {\displaystyle\int_{K'}
      \exp\left(-\frac{\chi^2(\bm\rho)}{2}\right)
      \sqrt{\det C^{\bm\rho\bm\rho}(\bm\rho)}\dd\bm\rho},
  \end{equation*}
  where $C^{\bm\rho\bm\rho} = \Gamma_{\bm\rho\bm\rho}^{-1}$ and
  $\Gamma_{\bm\rho\bm\rho}\in \Mat_2(\R)$ is the restriction of the
  covariance matrix $\Gamma$ to the $R$ space.
\end{theorem}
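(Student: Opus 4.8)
The plan is to re-run the chain of maps of Section~\ref{sec:pdf}, but to pull the residual Gaussian all the way back to $K'$ through the \emph{full} composite map, without freezing the residual-space geometry at $\mathbf{x}^*$. Set $\Phi \coloneqq F\circ f_\mu : K' \to \R^m$, so that $\Phi(\bm\rho) = \bm\xi(\mathbf{x}(\bm\rho))$ parametrises the curved $2$-manifold $V = F(\mathcal{M})$. The non-linear propagation consists in conditioning $p_{\bm\Xi}$ on $V$ itself rather than on its tangent plane $T_{\bm\xi^*}V$: exactly as in Corollary~\ref{cor:cond_gauss}, the conditional density on $V$ is the normalised restriction of $p_{\bm\Xi}$ with respect to the surface measure $\dd S$ on $V$, the only difference being that $\dd S$ now carries the varying intrinsic geometry of $V$. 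Pulling this density back to $K'$ by integration over a manifold (as in Theorem~\ref{thm:mov-ar}) introduces the Gramian of $\Phi$, giving
\[
    p_{\mathbf{R}}(\bm\rho) = \frac{\exp\!\left(-\tfrac12\,\Phi(\bm\rho)^\top\Phi(\bm\rho)\right)\sqrt{G_\Phi(\bm\rho)}}{\displaystyle\int_{K'}\exp\!\left(-\tfrac12\,\Phi(\bm\rho)^\top\Phi(\bm\rho)\right)\sqrt{G_\Phi(\bm\rho)}\dd\bm\rho}, \qquad G_\Phi \coloneqq \det\!\big((D\Phi)^\top D\Phi\big).
\]

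Next I would reduce both ingredients. For the exponent, $\Phi(\bm\rho)^\top\Phi(\bm\rho) = \bm\xi(\mathbf{x}(\bm\rho))^\top\bm\xi(\mathbf{x}(\bm\rho)) = mQ(\mathbf{x}(\bm\rho)) = \chi^2(\bm\rho) + mQ^*$, so the constant factor $\exp(-mQ^*/2)$ cancels between numerator and denominator and leaves the desired $\exp(-\chi^2(\bm\rho)/2)$. For the Gramian, the chain rule gives $D\Phi = B\,(Df_\mu)_{\bm\rho}$ with $B = \de F/\de\mathbf{x}$ the design matrix, whence
\[
    (D\Phi)^\top D\Phi = (Df_\mu)_{\bm\rho}^\top\,C\,(Df_\mu)_{\bm\rho}, \qquad C \coloneqq B^\top B,
\]
the (Gauss--Newton) normal matrix, now evaluated along the MOV rather than frozen at $\mathbf{x}^*$.

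The heart of the argument is the identification $G_\Phi = \det C^{\bm\rho\bm\rho}$. Writing $C$ in blocks adapted to the $(\mathcal{A},\bm\rho)$ splitting and using \eqref{eq:diff_f_M}, namely $(Df_\mu)_{\bm\rho} = \begin{psmallmatrix}J\\ I_2\end{psmallmatrix}$ with $J = \de\mathcal{A}^*/\de\bm\rho$, the product expands to
\[
    J^\top C_{\mathcal{A}\mathcal{A}} J + J^\top C_{\mathcal{A}\bm\rho} + C_{\bm\rho\mathcal{A}} J + C_{\bm\rho\bm\rho}.
\]
Here I would insert the formula $J = -C_{\mathcal{A}\mathcal{A}}^{-1}C_{\mathcal{A}\bm\rho}$ supplied by Theorem~\ref{thm:mov-ar} (recalling $C_{\mathcal{A}\bm\rho} = B_{\mathcal A}^\top B_{\bm\rho}$, in the same approximation neglecting second derivatives times residuals): then $C_{\mathcal{A}\mathcal{A}}J = -C_{\mathcal{A}\bm\rho}$, the first and second terms cancel against each other, and what remains is the Schur complement $C_{\bm\rho\bm\rho} - C_{\bm\rho\mathcal{A}}C_{\mathcal{A}\mathcal{A}}^{-1}C_{\mathcal{A}\bm\rho}$. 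By the block-inversion formula this Schur complement is precisely $(\Gamma_{\bm\rho\bm\rho})^{-1} = C^{\bm\rho\bm\rho}$, which closes the computation. I expect the main obstacle to be conceptual rather than algebraic: one must justify that ``conditioning on the curved manifold $V$'' means disintegrating $p_{\bm\Xi}$ against the surface measure of $V$ (the honest non-linear analogue of Corollary~\ref{cor:cond_gauss}), since it is exactly this choice that makes the varying factor $\sqrt{\det C^{\bm\rho\bm\rho}(\bm\rho)}$ survive in place of the constant $\sqrt{\det C(\mathbf{x}^*)}$ of Theorem~\ref{thm:res-mov}, producing the Jeffreys-type density. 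The cancellation of the cross terms, while routine, is the one algebraic step worth double-checking, as it is what turns the naive block product into the marginal precision $C^{\bm\rho\bm\rho}$.
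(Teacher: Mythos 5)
Your proposal is correct and follows essentially the same route as the paper's own proof: both pull $p_{\bm\Xi}$ back through the composite map $F\circ f_{\mu}$ via its Gramian, expand $(Df_\mu)^\top C\,(Df_\mu)$ in blocks adapted to the $(\mathcal{A},\bm\rho)$ splitting, substitute $\partial\mathcal{A}^*/\partial\bm\rho=-C_{\mathcal{A}}^{-1}C_{\mathcal{A}\bm\rho}$ from Theorem~\ref{thm:mov-ar}, and identify the surviving Schur complement $C_{\bm\rho\bm\rho}-C_{\mathcal{A}\bm\rho}^\top C_{\mathcal{A}}^{-1}C_{\mathcal{A}\bm\rho}$ with $\Gamma_{\bm\rho\bm\rho}^{-1}=C^{\bm\rho\bm\rho}$ (for which the paper cites \cite{milani:orbdet}, Section~5.4, rather than re-deriving the block-inversion formula). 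The only differences are cosmetic: you make explicit the cancellation of the constant factor $\exp(-mQ^*/2)$ and the disintegration of $p_{\bm\Xi}$ against the surface measure on $V$, both of which the paper leaves implicit in its appeal to integration over a manifold.
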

\begin{proof}
    The map $f_{\mu}:K'\rightarrow \mathcal{M}$ is a global
    parametrization of $\mathcal{M}$. From the properties of the map
    $F$ it is easy to prove that the map
    $F\circ f_{\mu} : K'\rightarrow F(\mathcal{M})$ is a global
    parametrization of the 2-dimensional manifold
    $V=F(\mathcal{M})$. From the notion of integration on a manifold
    we have
    \[
        p_{\mathbf{R}}(\bm\rho) = p_{\bm\Xi}(\bm\xi(\bm\rho))\cdot
        \sqrt{\det \left[\frac{\partial (F\circ f_{\mu})}{\partial
              \bm\rho}(\bm\rho)\right]^\top \frac{\partial (F\circ
            f_{\mu})}{\partial \bm\rho}(\bm\rho)}
    \]
    and by the chain rule
    \[
        \frac{\partial (F\circ f_{\mu})}{\partial
          \bm\rho}(\bm\rho) = \frac{\partial F}{\partial
          \mathbf{x}}(\mathbf{x}(\bm\rho))\frac{\partial
          f_{\mu}}{\partial \bm\rho}(\bm\rho) =
        B(\mathbf{x}(\bm\rho)) \begin{pmatrix}
            \dfrac{\partial\mathcal A^*}{\partial\bm\rho}(\bm\rho)\\[0.3cm]
            I_2
        \end{pmatrix},
    \]
    so that the Gramian matrix becomes
    \begin{align*}
        \left[\frac{\partial (F\circ
            f_{\mu})}{\partial \bm\rho}\right]^\top
        \frac{\partial (F\circ f_{\mu})}{\partial \bm\rho}
        &=
        \begin{pmatrix}
            \dfrac{\partial\mathcal A^*}{\partial\bm\rho}\\[0.3cm] I_2
        \end{pmatrix}^\top B^\top B \begin{pmatrix}
            \dfrac{\partial\mathcal
              A^*}{\partial\bm\rho}\\[0.3cm] I_2
        \end{pmatrix} =\\
        &= \left(\dfrac{\partial\mathcal
            A^*}{\partial\bm\rho}\right)^\top C_{\mathcal{A}\mathcal{A}}
        \dfrac{\partial\mathcal A^*}{\partial\bm\rho} +
        C_{\bm\rho\mathcal{A}} \dfrac{\partial\mathcal
          A^*}{\partial\bm\rho} + \left(\dfrac{\partial\mathcal
            A^*}{\partial\bm\rho}\right)^\top C_{\mathcal{A}\bm\rho} +
        C_{\bm\rho\bm\rho},
    \end{align*}
    where the matrices $C_{\mathcal{A}\mathcal{A}}=C_{\mathcal{A}}$,
    $C_{\mathcal{A}\bm\rho}$,
    $C_{\bm\rho\mathcal{A}}=C_{\mathcal{A}\bm\rho}^\top$, and
    $C_{\bm\rho\bm\rho}$ are the restrictions of the normal matrix
    $C(\mathbf{x}(\bm\rho))$ to the corresponding subspace. From
    Theorem~\ref{thm:mov-ar} we have
    \[
        \dfrac{\partial\mathcal A^*}{\partial\bm\rho} =
        -C_{\mathcal{A}}^{-1}B_{\mathcal{A}}^\top B_{\bm\rho} =
        -C_{\mathcal{A}}^{-1} C_{\mathcal{A}\bm\rho},
    \]
    so that the previous expression becomes
    \begin{align*}
        \left[\frac{\partial (F\circ
            f_{\mu})}{\partial \bm\rho}\right]^\top
        \frac{\partial (F\circ f_{\mu})}{\partial \bm\rho}
        &= C_{\mathcal{A}\bm\rho}^\top C_{\mathcal{A}}^{-1} C_{\mathcal{A}\bm\rho}
        -2 C_{\mathcal{A}\bm\rho}^\top C_{\mathcal{A}}^{-1} C_{\mathcal{A}\bm\rho} + C_{\bm\rho\bm\rho} = \\
        &=C_{\bm\rho\bm\rho}-C_{\mathcal{A}\bm\rho}^\top C_{\mathcal{A}}^{-1} C_{\mathcal{A}\bm\rho} = C^{\bm\rho\bm\rho} = \Gamma_{\bm\rho\bm\rho}^{-1},
    \end{align*}
    where the last equality is proved in \cite{milani:orbdet},
    Section~5.4.
\qed\end{proof}

\section{Conditional density on each attributable space}
\label{sec:prob-int}

In this section we prove that the conditional density of the
attributable $\mathcal{A}$ given $\bm\rho=\bm\rho_0\in K'$ is
Gaussian, providing a probabilistic interpretation to the doubly
constrained differential corrections.

Once $\bm\rho_0\in K'$ has been fixed, consider the fibre of
$\bm\rho_0$ with respect to the projection from $\mathcal{X}$ to $R$,
so that
\[
  H_{\bm\rho_0} \coloneqq A\times
  \{\bm\rho_0\}=\{(\mathcal{A},\bm\rho_0) \,:\, \mathcal{A}\in A\}.
\]
The fibre $H_{\bm\rho_0}$ is diffeomorphic to $A$ and thus
$H_{\bm\rho_0}$ is a $4$-dimensional submanifold of $\mathcal{X}$,
actually a 4-dimensional affine subspace, and the collection
$\{H_{\bm\rho_0}\}_{\bm\rho_0\in K'}$ is a 4-dimensional foliation of
$A\times K'\subseteq \mathcal{X}$. Theorem~\ref{thm:res_to_mov} gives
a probability density function on each leaf of this
foliation. Moreover, let us denote by
$\phi_{\bm\rho_0}:A\rightarrow H_{\bm\rho_0}$ the canonical
diffeomorphism between $A$ and $H_{\bm\rho_0}$, that is
$\phi_{\bm\rho_0}(\mathcal{A}) \coloneqq (\mathcal{A},\bm\rho_0)$ for
all $\mathcal{A}\in A$.

\begin{theorem}\label{thm:res_to_mov}
    Let $\mathbf{A}$ be the random variable of the space $A$. For each
    $\bm\rho_0\in K'$, the conditional probability density function of
    $\mathbf{A}$ given $\mathbf{R}=\bm\rho_0$ is
  \begin{align*}
    p_{\mathbf{A}|\mathbf{R}=\bm\rho_0}(\mathcal{A}) &=
        N\left(\mathcal{A}^*(\bm\rho_0),
        C_{\mathcal{A}}(\bm\rho_0)^{-1}\right)(\mathcal{A})=\\
       &=\frac{\sqrt{\det C_{\mathcal{A}}(\bm\rho_0)}}{(2\pi)^2}
                                         \exp\left(-\frac 12
                                         \left(\mathcal{A}-\mathcal{A}^*(\bm\rho_0)\right)^\top
                                         C_{\mathcal{A}}(\bm\rho_0)
                                         \left(\mathcal{A}-\mathcal{A}^*(\bm\rho_0)\right)\right),
  \end{align*}
  where we have used the compact notation
  $C_{\mathcal{A}}(\bm\rho_0)\coloneqq
  C_{\mathcal{A}}(\mathcal{A}^*(\bm\rho_0),\bm\rho_0)$.
\end{theorem}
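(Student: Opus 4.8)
The plan is to replay the argument of Theorem~\ref{thm:res-mov}, replacing the 2-dimensional manifold $\mathcal M$ by the 4-dimensional leaf $H_{\bm\rho_0}$ of the foliation $\{H_{\bm\rho_0}\}_{\bm\rho_0\in K'}$. Fix $\bm\rho_0\in K'$ and write $\mathbf x_0\coloneqq(\mathcal A^*(\bm\rho_0),\bm\rho_0)$ and $\bm\xi_0\coloneqq\bm\xi(\mathbf x_0)$. I would first restrict the residuals map to the fibre, namely consider $F\circ\phi_{\bm\rho_0}:A\to\R^m$, $\mathcal A\mapsto F(\mathcal A,\bm\rho_0)$, and linearise it around $\mathcal A^*(\bm\rho_0)$. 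Its differential is the design matrix $B_{\mathcal A}(\mathbf x_0)$, which has full rank $4$ precisely because $\bm\rho_0\in K'$, so that $C_{\mathcal A}(\bm\rho_0)=B_{\mathcal A}(\mathbf x_0)^\top B_{\mathcal A}(\mathbf x_0)$ is invertible. Consequently $F\circ\phi_{\bm\rho_0}$ parametrises, near $\bm\xi_0$, a 4-dimensional submanifold of $\R^m$ whose tangent space at $\bm\xi_0$ is the affine subspace
\[
    T\coloneqq\left\{\bm\xi\in\R^m\,:\,\bm\xi=\bm\xi_0+B_{\mathcal A}(\mathbf x_0)(\mathcal A-\mathcal A^*(\bm\rho_0)),\ \mathcal A\in A\right\}.
\]

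The one step that carries genuine content is the orthogonality $\bm\xi_0\in\Imm\!\left(B_{\mathcal A}(\mathbf x_0)\right)^\perp$, and I expect it to be the main (albeit short) obstacle, since it is the only place where the geometry specific to the doubly constrained corrections enters. The relevant stationarity here is only along the fibre: by definition $\mathcal A^*(\bm\rho_0)$ is the local minimum of $\left.Q\right|_{\bm\rho=\bm\rho_0}$, so the first-order condition gives
\[
    \mathbf 0=\frac{\partial Q}{\partial\mathcal A}(\mathbf x_0)=\frac 2m\,\bm\xi_0^\top B_{\mathcal A}(\mathbf x_0),
\]
which is exactly the asserted orthogonality of $\bm\xi_0$ to $\Imm\!\left(B_{\mathcal A}(\mathbf x_0)\right)$.

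With this in hand I would apply Corollary~\ref{cor:cond_gauss} with $B=B_{\mathcal A}(\mathbf x_0)$, $N=4$ and $\bm\xi^*=\bm\xi_0$ to obtain the conditional Gaussian density of $\bm\Xi$ on $T$, namely $p_{\bm\Xi|T}(\bm\xi)=(2\pi)^{-2}\exp\!\left(-\tfrac12(\bm\xi-\bm\xi_0)^\top(\bm\xi-\bm\xi_0)\right)$ for $\bm\xi\in T$. It then remains to transport this density back to $A$ along the invertible differential $\mathcal A\mapsto\bm\xi_0+B_{\mathcal A}(\mathbf x_0)(\mathcal A-\mathcal A^*(\bm\rho_0))$, exactly as in the final paragraph of the proof of Theorem~\ref{thm:res-mov}. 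Under this change of variables, using $C_{\mathcal A}(\bm\rho_0)=B_{\mathcal A}(\mathbf x_0)^\top B_{\mathcal A}(\mathbf x_0)$ for the normalised residuals, the quadratic form in the exponent becomes $(\mathcal A-\mathcal A^*(\bm\rho_0))^\top C_{\mathcal A}(\bm\rho_0)(\mathcal A-\mathcal A^*(\bm\rho_0))$, while the Jacobian of the parametrisation of $T$ over $A$ contributes the factor $\sqrt{\det\!\left(B_{\mathcal A}(\mathbf x_0)^\top B_{\mathcal A}(\mathbf x_0)\right)}=\sqrt{\det C_{\mathcal A}(\bm\rho_0)}$. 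Collecting the two factors yields precisely $N\!\left(\mathcal A^*(\bm\rho_0),C_{\mathcal A}(\bm\rho_0)^{-1}\right)$. The only bookkeeping I would double-check is that, unlike in Theorem~\ref{thm:res-mov}, no separate normalisation is required: here $C_{\mathcal A}(\bm\rho_0)$ is a genuine $4\times4$ form on the full leaf, so the constant $\sqrt{\det C_{\mathcal A}(\bm\rho_0)}/(2\pi)^2$ already integrates the density to one over $A$.
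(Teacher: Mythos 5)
Your proof is correct and follows essentially the same route as the paper's: restrict $F$ to the fibre via $\phi_{\bm\rho_0}$, observe that $B_{\mathcal A}(\mathbf x_0)$ has full rank because $\bm\rho_0\in K'$, apply the conditional-Gaussian results of Section~\ref{sec:gauss-cond} on the affine tangent space at $\bm\xi_0$, and pull the density back to $A$ along the invertible linearised map, with the Gramian factor $\sqrt{\det C_{\mathcal A}(\bm\rho_0)}$ and the quadratic form $\left(\mathcal A-\mathcal A^*(\bm\rho_0)\right)^\top C_{\mathcal{A}}(\bm\rho_0)\left(\mathcal A-\mathcal A^*(\bm\rho_0)\right)$ combining into the stated normal density. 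Your explicit verification that $\bm\xi_0\in\Imm\left(B_{\mathcal A}(\mathbf x_0)\right)^\perp$, via the first-order condition $\frac{\partial Q}{\partial\mathcal A}(\mathbf x_0)=\mathbf 0$ along the fibre, supplies a hypothesis of Corollary~\ref{cor:cond_gauss} that the paper's proof uses tacitly when invoking Theorem~\ref{teo:cond_gauss}, and it is exactly right.
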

\begin{proof}
    Define the map
    $G_{\bm\rho_0}\coloneqq F\circ \phi_{\bm\rho_0}:A\rightarrow
    \R^m$. The differential of $G_{\bm\rho_0}$ is represented by the
    design matrix $B_{\mathcal{A}}\in \Mat_{m,4}(\R)$ introduced in
    \eqref{C_A}. Consider the point
    $\mathbf{x}_0= (\mathcal{A}^*(\bm\rho_0),\bm\rho_0)\in
    H_{\bm\rho_0}$, where $\mathcal{A}^*(\bm\rho_0)$ is the best-fit
    attributable corresponding to $\bm\rho=\bm\rho_0$, that exists
    since $\bm\rho_0\in K'$. Given that the doubly constrained
    differential corrections converge to $\mathbf{x}_0$, the matrix
    $B_{\mathcal{A}}(\mathbf{x}_0)$ is full rank. It follows that the
    map $G_{\bm\rho_0}$ is a global parametrization of
    \[
        V_{\bm\rho_0}\coloneqq G_{\bm\rho_0}(A) = F(H_{\bm\rho_0})= \{\bm\xi\in \R^m
        \,:\, \bm\xi=F(\mathcal{A},\bm\rho_0),\ \mathcal{A}\in A\},
    \]
    that turns out to be a 4-dimensional submanifold of the residuals
    space $\R^m$, at least in a suitable neighbourhood of
    $\bm\xi_0\coloneqq F(\mathbf{x}_0) =
    G_{\bm\rho_0}(\mathcal{A}_0(\bm\rho_0))$. The map $G_{\bm\rho_0}$
    induces the tangent map between the corresponding tangent bundles
    \[
        DG_{\bm\rho_0}:TA\rightarrow TV_{\bm\rho_0}.
    \]
    In particular we consider the tangent application
    \[
        (DG_{\bm\rho_0})_{\mathcal{A}^*(\bm\rho_0)}:
        T_{\mathcal{A}^*(\bm\rho_0)}A \rightarrow T_{\bm\xi_0}V_{\bm\rho_0}.
    \]
    To use this map for the probability density propagation, we first
    need to have the probability density function on
    $T_{\bm\xi_0}V_{\bm\rho_0}$, that is an affine subspace of
    dimension 4 in $\R^m$. By Theorem~\ref{teo:cond_gauss} we have
    that the conditional probability density of $\bm\Xi$ on
    $T_{\bm\xi_0}V_{\bm\rho_0}$ is $N(\mathbf{0},I_4)$. Let $R$
    represent the rotation of the residuals space $\R^m$ such that
    condition \eqref{eq:rotation} holds, and let
    $A(\mathbf{x}_0)\in \Mat_4(R)$ as in Lemma
    \ref{lemma:equivalence_rotation}, so that the matrix
    $A(\mathbf{x}_0)^{-1}$ represents the inverse map
    $\left((DG_{\bm\rho_0})_{\mathcal{A}^*(\bm\rho_0)}\right)^{-1}$. By
    the transformation law of a Gaussian random variable under the
    linear map $A(\mathbf{x}_0)^{-1}$, we obtain a probability density
    function on the attributable space $A$ given by
    \[
        p_{\bf {A}}(\mathcal{A})= N(\mathcal{A}^*(\bm\rho_0),
        \Gamma_{\mathcal A}(\mathcal{A}^*(\bm\rho_0)))(\mathcal{A}),
    \]
    where $\mathbf{A}$ is the random variable on $A$ and
    \[
        \Gamma_{\mathcal A}(\mathcal{A}^*(\bm\rho_0)) =
        A(\mathbf{x}_0)^{-1}I_4(A(\mathbf{x}_0)^{-1})^\top =
        A(\mathbf{x}_0)^{-1}(A(\mathbf{x}_0)^{-1})^\top .
    \]
    As a consequence, the normal matrix of the random variable
    $\mathbf{A}$ is
    \[
        A(\mathbf{x}_0)^\top A(\mathbf{x}_0) =
        B_{\mathcal{A}}(\mathbf{x}_0)^\top R^\top RB_{\mathcal{A}}(\mathbf{x}_0) =
        B_{\mathcal{A}}(\mathbf{x}_0)^\top B_{\mathcal{A}}(\mathbf{x}_0) =
        C_{\mathcal{A}}(\mathbf{x}_0),
    \]
    which is in turn the normal matrix of the doubly constrained
    differential corrections leading to $\mathbf{x}_0$, computed at
    convergence. This completes the proof since $A$ and
    $A\times \{\bm\rho_0\}$ are diffeomorphic and thus the density
    $p_{\mathbf{A}}(\mathcal{A})$ is also the conditional density of
    $\mathbf{A}$ given $\mathbf{R}=\bm\rho_0$.
\qed\end{proof}


\section{Conclusions and future work}
\label{sec:conc}

In this paper we considered the hazard assessment of short-term
impactors based on the Manifold Of Variations method described in
\cite{spoto:immimp}. This technique is currently at the basis of
NEOScan, a software system developed at SpaceDyS and at the University
of Pisa devoted to the orbit determination and impact monitoring of
objects in the MPC NEO Confirmation Page. The aim of the paper was not
to describe a new technique, but rather to provide a mathematical
background to the probabilistic computations associated to the MOV.

Concerning the problem of the hazard assessment of a potential
impactor, the prediction of the impact location is a fundamental
issue. \cite{dimare:imp_corr} developed a semilinear method for such
predictions, starting from a full orbit with covariance. The MOV,
being a representation of the asteroid confidence region, could be
also used to this end and even when a full orbit is not available. The
study of such a technique and the comparison with already existing
methods will be subject of future research.


\bibliographystyle{elsarticle-harv} \bibliography{mov_ipcomp}

\begin{thebibliography}{13}
\expandafter\ifx\csname natexlab\endcsname\relax\def\natexlab#1{#1}\fi
\expandafter\ifx\csname url\endcsname\relax
  \def\url#1{\texttt{#1}}\fi
\expandafter\ifx\csname urlprefix\endcsname\relax\def\urlprefix{URL }\fi

\bibitem[{{Chesley}(2005)}]{chesley2005}
{Chesley}, S.~R., Feb. 2005. {Very short arc orbit determination: the case of
  asteroid $2004 FU_{162}$}. In: {Kne{\v z}evi{\'c}}, Z., {Milani}, A. (Eds.),
  IAU Colloq. 197: Dynamics of Populations of Planetary Systems. pp. 255--258.

\bibitem[{Del~Vigna(2018)}]{delvigna:phd}
Del~Vigna, A., Dec. 2018. {On Impact Monitoring of Near-Earth Asteroids}. Ph.D.
  thesis, Department of Mathematics, University of Pisa.

\bibitem[{Del~Vigna et~al.(2019)Del~Vigna, Milani, Spoto, Chessa, and
  Valsecchi}]{delvigna:compl_IM}
Del~Vigna, A., Milani, A., Spoto, F., Chessa, A., Valsecchi, G.~B., 2019.
  {Completeness of Impact Monitoring}. Icarus {321}.

\bibitem[{Dimare et~al.(2020)Dimare, Del~Vigna, Bracali~Cioci, and
  Bernardi}]{dimare:imp_corr}
Dimare, L., Del~Vigna, A., Bracali~Cioci, D., Bernardi, F., 2020. Use of a
  semilinear method to predict the impact corridor on ground. Celestial
  Mechanics and Dynamical Astronomy 132.

\bibitem[{{Farnocchia} et~al.(2015){Farnocchia}, {Chesley}, and
  {Micheli}}]{farnocchia2015}
{Farnocchia}, D., {Chesley}, S.~R., {Micheli}, M., Sep. 2015. {Systematic
  ranging and late warning asteroid impacts}. Icarus 258, 18--27.

\bibitem[{{Milani} and {Gronchi}(2010)}]{milani:orbdet}
{Milani}, A., {Gronchi}, G.~F., 2010. {Theory of Orbit Determination}.
  Cambridge University Press.

\bibitem[{{Milani} et~al.(2004){Milani}, {Gronchi}, {De' Michieli Vitturi}, and
  {Kne\^zevi\'c}}]{milani2004AR}
{Milani}, A., {Gronchi}, G.~F., {De' Michieli Vitturi}, M., {Kne\^zevi\'c}, Z.,
  Apr. 2004. {Orbit determination with very short arcs. {I} admissible
  regions}. Celestial Mechanics and Dynamical Astronomy 90~(1-2), 57--85.

\bibitem[{{Milani} et~al.(2007){Milani}, {Gronchi}, and {Kne{\v
  z}evi{\'c}}}]{milani2007}
{Milani}, A., {Gronchi}, G.~F., {Kne{\v z}evi{\'c}}, Z., Apr. 2007. {New
  Definition of Discovery for Solar System Objects}. Earth Moon and Planets
  100, 83--116.

\bibitem[{Milani et~al.(2005)Milani, Sansaturio, Tommei, Arratia, and
  Chesley}]{milani:multsol}
Milani, A., Sansaturio, M., Tommei, G., Arratia, O., Chesley, S.~R., Feb. 2005.
  Multiple solutions for asteroid orbits: {C}omputational procedure and
  applications. Astronomy \& Astrophysics 431, 729--746.

\bibitem[{{Milani} and {Valsecchi}(1999)}]{milani:ident2}
{Milani}, A., {Valsecchi}, G.~B., Aug. 1999. {The Asteroid Identification
  Problem. II. Target Plane Confidence Boundaries}. Icarus 140, 408--423.

\bibitem[{Solin and Granvik(2018)}]{neoranger}
Solin, O., Granvik, M., 2018. {Monitoring near-Earth-object discoveries for
  imminent impactors}. Astronomy \& Astrophysics 616.

\bibitem[{Spoto et~al.(2018)Spoto, Del~Vigna, Milani, Tommei, Tanga, Mignard,
  Carry, Thuillot, and David}]{spoto:immimp}
Spoto, F., Del~Vigna, A., Milani, A., Tommei, G., Tanga, P., Mignard, F.,
  Carry, B., Thuillot, W., David, P., 2018. {Short arc orbit determination and
  imminent impactors in the Gaia era}. Astronomy \& Astrophysics {614}.

\bibitem[{Tommei(2006)}]{tommei:phd}
Tommei, G., 2006. {Impact Monitoring of NEOs: theoretical and computational
  results}. Ph.D. thesis, University of Pisa.

\end{thebibliography}

\end{document}